\numberwithin{equation}{section}
\tikzset{cross/.style={cross out, draw=black, minimum size=2*(#1-\pgflinewidth), inner sep=0pt, outer sep=0pt},
cross/.default={4.5pt}}
\numberwithin{equation}{section}
\DeclareMathOperator{\lk}{\ell k}
\renewcommand{\geq}{\geqslant}
\renewcommand{\leq}{\leqslant} 
\renewcommand{\epsilon}{\varepsilon}
\newcommand{\Z}{\mathbb{Z}}
\newcommand{\F}{\mathbb{F}}
\def\Z{\mathbb{Z}}
\def\L{\mathcal{L}}
\def\H{\mathbb{H}}
\def\F{\mathbb{F}}
\def\s{\mathfrak{s}}
\def\OS{Ozsv\'ath }
\DeclareFontFamily{U}{mathx}{\hyphenchar\font45}
\DeclareFontShape{U}{mathx}{m}{n}{
      <5> <6> <7> <8> <9> <10>
      <10.95> <12> <14.4> <17.28> <20.74> <24.88>
      mathx10
      }{}
\DeclareSymbolFont{mathx}{U}{mathx}{m}{n}
\DeclareMathAccent{\widecheck}{0}{mathx}{"71}
\DeclareMathAccent{\wideparen}{0}{mathx}{"75}
\theoremstyle{plain}
\newtheorem{theorem}{Theorem}[section]
\newtheorem{lemma}[theorem]{Lemma}
\newtheorem{proposition}[theorem]{Proposition}
\newtheorem{corollary}[theorem]{Corollary}
\theoremstyle{definition}
\newtheorem{example}[theorem]{Example}
\newtheorem{remark}[theorem]{Remark}
\newtheorem{definition}[theorem]{Definition}
\pgfplotsset{compat=1.17}
\begin{document}
\title{Fibered and strongly quasi-positive $L$-space links}

\author{\scshape{Alberto Cavallo and Beibei Liu}\\ \\
 \footnotesize{Max Planck Institute for Mathematics,}\\
 \footnotesize{Bonn 53111, Germany}\\ \\ \small{cavallo@mpim-bonn.mpg.de}\\ \small{bbliumath@gmail.com}}
\date{}


\maketitle

\begin{abstract}
Every $L$-space knot is fibered and strongly quasi-positive, but this does not hold for $L$-space links. In this paper, we use the so called $H$-function, which is a concordance link invariant, to introduce a subfamily of fibered strongly quasi-positive $L$-space links. 
Furthermore, we present an infinite family of $L$-space links which are not quasi-positive.     
\end{abstract}

\section{Introduction}

It is well known that every $L$-space knot is fibered and strongly quasi-positive \cite{Ghiggini,Hedden,OS05}. Surprisingly, this does not hold for general $L$-space links. Not every $L$-space link is fibered, an example (\cite{LiuY}) is given by the 2-bridge links $b(10n+4,-5)$ with $2\leq n\leq6$, and this can be detected from the rank of $\widehat{HFL}$ on the top degree, see \cite{Ghiggini,Ni}. Not all fibered $L$-space links are strongly quasi-positive, or quasi-positive. For example, the Whitehead link is a fibered $L$-space link, but it is neither strongly quasi-positive nor quasi-positive. 

Indeed, $L$-space knots and $L$-space links are quite different in a lot of aspects. They are both defined in terms of large surgeries, but if there is one $L$-space surgery on this knot then the knot is an $L$-space knot, and all surgeries with surgery coefficient greater than that one are $L$-space surgeries. This also does not hold for $L$-space links (see \cite{LiuY}), and the $L$-space surgeries on $L$-space links are in general much more complicated than surgeries on knots, see \cite{GN2, Liu2, Sar}. 

The $H$-function $H_{\L}(\bm{s})$ is a link concordance invariant, defined on some $n$-dimensional lattice $\H(\L)$ where $n$ is the number of components of the link, and takes non-negative integer values, see \cite{GN}. Note that whether a link $\L$ is an $L$-space link does not depend on the orientation of $\L$. However, the $H$-function depends on the orientation. 

We say a link $\L$ is \emph{type (B)} if there is a lattice point $\bm{s}=(s_{1}, \cdots, s_{n})\in \H(\L)$ such that $H_{\L}(\bm{s}')=0$ if and  only if  $\bm{s}'\succeq \bm{s}$. Otherwise, we say the link $\L$ is \emph{type (A)}. If furthermore $\L$ is a type (B) $L$-space link then we call $\L$ \emph{special} when its $H$-function satisfies the following three equations:
 \begin{equation} 
  \label{equ1}
  s_i=g_3(L_i)+\dfrac{\lk(L_i,\L\setminus L_i)}{2} 
 \end{equation}
 for every $i=1,...,n$;
 \begin{equation}
 \label{equ2}
     H_{\L}(\bm{s}-\bm{1})=1
\end{equation}
        where $\bm{1}=\{1, \cdots, 1\};$
   
\begin{equation}
    \label{equ3}
     H_{\L}(x_1,\cdots,x_{n})=H_{\L}(\bar{x}_{1},\cdots,\bar{x}_{n})
 \end{equation}
 for every $\textbf x=(x_{1}, \cdots, x_{n})\in\H(\L)$  with  \[\sum_{i=1}^nx_i\geq\sum_{i=1}^ns_i-n\:,\] where $\bar{x}_{i}=\infty$ if $x_{i}\geq s_{i}$ and $\bar{x}_{i}=x_{i}$ otherwise. For example, the link $L7n1$ is a special $L$-space link, while the 2-component unlink is type (B) but not special, and their $H$-function is shown in Figure \ref{figure7}.
\begin{figure}[H]
\centering 
\begin{tikzpicture}[scale =.6]
\draw[help lines, color=black!30, dashed] (-4.9,-4.9) grid (4.9,4.9);
\draw[->, color=black, thick] (-5,0)--(5,0) node[right]{$s_{1}$};
\draw[->, color=black, thick] (0,-5)--(0,5) node[above]{$s_{2}$};
\node[color=black] at (0.2, 0) {1};
\node[color=black] at (0.2, 1) {1};
\node[color=black] at (0.2, 2) {1};
\node[color=black] at (0.1, 3) {1};
\node[color=black] at (0.1, 4) {1};
\node[color=black] at (0.1, 5) {$\vdots$};
\node[color=black] at (0.2, -1) {2};
\node[color=black] at (0.2, -2) {3};
\node[color=black] at (0.1, -3) {4};
\node[color=black] at (0.1, -4) {5};
\node[color=black] at (0.1, -5) {$\vdots$};

\node[color=black] at (1.1, 0) {1};
\node[color=black] at (1.2, 1) {1};
\node[color=black] at (1.2, 2) {0};
\node[color=black] at (1.1, 3) {0};
\node[color=black] at (1.1, 4) {0};
\node[color=black] at (1.1, 5) {$\vdots$};
\node[color=black] at (1.1, -1) {2};
\node[color=black] at (1.1, -2) {3};
\node[color=black] at (1.1, -3) {4};
\node[color=black] at (1.1, -4) {5};
\node[color=black] at (1.1, -5) {$\vdots$};

\node[color=black] at (2.1, 0) {1};
\node[color=black] at (2.1, 1) {1};
\node[color=black] at (2.1, 2) {0};
\node[color=black] at (2.1, 3) {0};
\node[color=black] at (2.1, 4) {0};
\node[color=black] at (2.1, 5) {$\vdots$};
\node[color=black] at (2.1, -1) {2};
\node[color=black] at (2.1, -2) {3};
\node[color=black] at (2.1, -3) {4};
\node[color=black] at (2.1, -4) {5};
\node[color=black] at (2.1, -5) {$\vdots$};

\node[color=black] at (3.1, 0) {1};
\node[color=black] at (3.1, 1) {1};
\node[color=black] at (3.1, 2) {0};
\node[color=black] at (3.1, 3) {0};
\node[color=black] at (3.1, 4) {0};
\node[color=black] at (3.1, 5) {$\vdots$};
\node[color=black] at (3.1, -1) {2};
\node[color=black] at (3.1, -2) {3};
\node[color=black] at (3.1, -3) {4};
\node[color=black] at (3.1, -4) {5};
\node[color=black] at (3.1, -5) {$\vdots$};

\node[color=black] at (4.1, 0) {1};
\node[color=black] at (4.1, 1) {1};
\node[color=black] at (4.1, 2) {0};
\node[color=black] at (4.1, 3) {0};
\node[color=black] at (4.1, 4) {0};
\node[color=black] at (4.1, 5) {$\vdots$};
\node[color=black] at (4.1, -1) {2};
\node[color=black] at (4.1, -2) {3};
\node[color=black] at (4.1, -3) {4};
\node[color=black] at (4.1, -4) {5};
\node[color=black] at (4.1, -5) {$\vdots$};

\node[color=black] at (5, 0) {$\cdots$};
\node[color=black] at (5, 1) {$\cdots$};
\node[color=black] at (5, 2) {$\cdots$};
\node[color=black] at (5, 3) {$\cdots$};
\node[color=black] at (5, 4) {$\cdots$};
\node[color=black] at (5, 5) {$\vdots$};
\node[color=black] at (5, -1) {$\cdots$};
\node[color=black] at (5, -2) {$\cdots$};
\node[color=black] at (5, -3) {$\cdots$};
\node[color=black] at (5, -4) {$\cdots$};
\node[color=black] at (5, -5) {$\vdots$};

\node[color=black] at (-0.9, 0) {2};
\node[color=black] at (-0.9, 1) {2};
\node[color=black] at (-0.9, 2) {2};
\node[color=black] at (-0.9, 3) {2};
\node[color=black] at (-0.9, 4) {2};
\node[color=black] at (-0.9, 5) {$\vdots$};
\node[color=black] at (-0.8, -1) {3};
\node[color=black] at (-0.8, -2) {3};
\node[color=black] at (-0.9, -3) {4};
\node[color=black] at (-0.9, -4) {5};
\node[color=black] at (-0.9, -5) {$\vdots$};

\node[color=black] at (-1.8, 0) {3};
\node[color=black] at (-1.9, 1) {3};
\node[color=black] at (-1.9, 2) {3};
\node[color=black] at (-1.9, 3) {3};
\node[color=black] at (-1.9, 4) {3};
\node[color=black] at (-1.9, 5) {$\vdots$};
\node[color=black] at (-1.8, -1) {4};
\node[color=black] at (-1.8, -2) {4};
\node[color=black] at (-1.9, -3) {5};
\node[color=black] at (-1.9, -4) {6};
\node[color=black] at (-1.9, -5) {$\vdots$};

\node[color=black] at (-2.9, 0) {4};
\node[color=black] at (-2.9, 1) {4};
\node[color=black] at (-2.9, 2) {4};
\node[color=black] at (-2.9, 3) {4};
\node[color=black] at (-2.9, 4) {4};
\node[color=black] at (-2.9, 5) {$\vdots$};
\node[color=black] at (-2.9, -1) {5};
\node[color=black] at (-2.9, -2) {5};
\node[color=black] at (-2.9, -3) {6};
\node[color=black] at (-2.9, -4) {7};
\node[color=black] at (-2.9, -5) {$\vdots$};

\node[color=black] at (-4, 0) {5};
\node[color=black] at (-4, 1) {5};
\node[color=black] at (-4, 2) {5};
\node[color=black] at (-4, 3) {5};
\node[color=black] at (-4, 4) {5};
\node[color=black] at (-4, 5) {$\vdots$};
\node[color=black] at (-4, -1) {6};
\node[color=black] at (-4, -2) {6};
\node[color=black] at (-4, -3) {7};
\node[color=black] at (-4, -4) {8};
\node[color=black] at (-4, -5) {$\vdots$};

\node[color=black] at (-5, 0) {$\cdots$};
\node[color=black] at (-5, 1) {$\cdots$};
\node[color=black] at (-5, 2) {$\cdots$};
\node[color=black] at (-5, 3) {$\cdots$};
\node[color=black] at (-5, 4) {$\cdots$};
\node[color=black] at (-5, 5) {$\vdots$};
\node[color=black] at (-5, -1) {$\cdots$};
\node[color=black] at (-5, -2) {$\cdots$};
\node[color=black] at (-5, -3) {$\cdots$};
\node[color=black] at (-5, -4) {$\cdots$};
\node[color=black] at (-5, -5) {$\vdots$};





\end{tikzpicture} 
\hspace{.5cm}
\begin{tikzpicture}[scale =.6]
\draw[help lines, color=black!30, dashed] (-4.9,-4.9) grid (4.9,4.9);
\draw[->, color=black, thick] (-5,0)--(5,0) node[right]{$s_{1}$};
\draw[->, color=black, thick] (0,-5)--(0,5) node[above]{$s_{2}$};
\node[color=black] at (0.2, 0) {0};
\node[color=black] at (0.2, 1) {0};
\node[color=black] at (0.2, 2) {0};
\node[color=black] at (0.1, 3) {0};
\node[color=black] at (0.1, 4) {0};
\node[color=black] at (0.1, 5) {$\vdots$};
\node[color=black] at (0.2, -1) {1};
\node[color=black] at (0.2, -2) {2};
\node[color=black] at (0.1, -3) {3};
\node[color=black] at (0.1, -4) {4};
\node[color=black] at (0.1, -5) {$\vdots$};

\node[color=black] at (1.1, 0) {0};
\node[color=black] at (1.2, 1) {0};
\node[color=black] at (1.2, 2) {0};
\node[color=black] at (1.1, 3) {0};
\node[color=black] at (1.1, 4) {0};
\node[color=black] at (1.1, 5) {$\vdots$};
\node[color=black] at (1.1, -1) {1};
\node[color=black] at (1.1, -2) {2};
\node[color=black] at (1.1, -3) {3};
\node[color=black] at (1.1, -4) {4};
\node[color=black] at (1.1, -5) {$\vdots$};

\node[color=black] at (2.1, 0) {0};
\node[color=black] at (2.1, 1) {0};
\node[color=black] at (2.1, 2) {0};
\node[color=black] at (2.1, 3) {0};
\node[color=black] at (2.1, 4) {0};
\node[color=black] at (2.1, 5) {$\vdots$};
\node[color=black] at (2.1, -1) {1};
\node[color=black] at (2.1, -2) {2};
\node[color=black] at (2.1, -3) {3};
\node[color=black] at (2.1, -4) {4};
\node[color=black] at (2.1, -5) {$\vdots$};

\node[color=black] at (3.1, 0) {0};
\node[color=black] at (3.1, 1) {0};
\node[color=black] at (3.1, 2) {0};
\node[color=black] at (3.1, 3) {0};
\node[color=black] at (3.1, 4) {0};
\node[color=black] at (3.1, 5) {$\vdots$};
\node[color=black] at (3.1, -1) {1};
\node[color=black] at (3.1, -2) {2};
\node[color=black] at (3.1, -3) {3};
\node[color=black] at (3.1, -4) {4};
\node[color=black] at (3.1, -5) {$\vdots$};

\node[color=black] at (4.1, 0) {0};
\node[color=black] at (4.1, 1) {0};
\node[color=black] at (4.1, 2) {0};
\node[color=black] at (4.1, 3) {0};
\node[color=black] at (4.1, 4) {0};
\node[color=black] at (4.1, 5) {$\vdots$};
\node[color=black] at (4.1, -1) {1};
\node[color=black] at (4.1, -2) {2};
\node[color=black] at (4.1, -3) {3};
\node[color=black] at (4.1, -4) {4};
\node[color=black] at (4.1, -5) {$\vdots$};

\node[color=black] at (5, 0) {$\cdots$};
\node[color=black] at (5, 1) {$\cdots$};
\node[color=black] at (5, 2) {$\cdots$};
\node[color=black] at (5, 3) {$\cdots$};
\node[color=black] at (5, 4) {$\cdots$};
\node[color=black] at (5, 5) {$\vdots$};
\node[color=black] at (5, -1) {$\cdots$};
\node[color=black] at (5, -2) {$\cdots$};
\node[color=black] at (5, -3) {$\cdots$};
\node[color=black] at (5, -4) {$\cdots$};
\node[color=black] at (5, -5) {$\vdots$};

\node[color=black] at (-0.9, 0) {1};
\node[color=black] at (-0.9, 1) {1};
\node[color=black] at (-0.9, 2) {1};
\node[color=black] at (-0.9, 3) {1};
\node[color=black] at (-0.9, 4) {1};
\node[color=black] at (-0.9, 5) {$\vdots$};
\node[color=black] at (-0.8, -1) {2};
\node[color=black] at (-0.8, -2) {3};
\node[color=black] at (-0.9, -3) {4};
\node[color=black] at (-0.9, -4) {5};
\node[color=black] at (-0.9, -5) {$\vdots$};

\node[color=black] at (-1.8, 0) {2};
\node[color=black] at (-1.9, 1) {2};
\node[color=black] at (-1.9, 2) {2};
\node[color=black] at (-1.9, 3) {2};
\node[color=black] at (-1.9, 4) {2};
\node[color=black] at (-1.9, 5) {$\vdots$};
\node[color=black] at (-1.8, -1) {3};
\node[color=black] at (-1.8, -2) {4};
\node[color=black] at (-1.9, -3) {5};
\node[color=black] at (-1.9, -4) {6};
\node[color=black] at (-1.9, -5) {$\vdots$};

\node[color=black] at (-2.9, 0) {3};
\node[color=black] at (-2.9, 1) {3};
\node[color=black] at (-2.9, 2) {3};
\node[color=black] at (-2.9, 3) {3};
\node[color=black] at (-2.9, 4) {3};
\node[color=black] at (-2.9, 5) {$\vdots$};
\node[color=black] at (-2.9, -1) {4};
\node[color=black] at (-2.9, -2) {5};
\node[color=black] at (-2.9, -3) {6};
\node[color=black] at (-2.9, -4) {7};
\node[color=black] at (-2.9, -5) {$\vdots$};

\node[color=black] at (-4, 0) {4};
\node[color=black] at (-4, 1) {4};
\node[color=black] at (-4, 2) {4};
\node[color=black] at (-4, 3) {4};
\node[color=black] at (-4, 4) {4};
\node[color=black] at (-4, 5) {$\vdots$};
\node[color=black] at (-4, -1) {5};
\node[color=black] at (-4, -2) {6};
\node[color=black] at (-4, -3) {7};
\node[color=black] at (-4, -4) {8};
\node[color=black] at (-4, -5) {$\vdots$};

\node[color=black] at (-5, 0) {$\cdots$};
\node[color=black] at (-5, 1) {$\cdots$};
\node[color=black] at (-5, 2) {$\cdots$};
\node[color=black] at (-5, 3) {$\cdots$};
\node[color=black] at (-5, 4) {$\cdots$};
\node[color=black] at (-5, 5) {$\vdots$};
\node[color=black] at (-5, -1) {$\cdots$};
\node[color=black] at (-5, -2) {$\cdots$};
\node[color=black] at (-5, -3) {$\cdots$};
\node[color=black] at (-5, -4) {$\cdots$};
\node[color=black] at (-5, -5) {$\vdots$};





\end{tikzpicture}
\caption{The $H$-functions of $L7n1$ (left) and $\bigcirc_2$ (right).} \label{figure7}
\end{figure}

The latter conditions are actually not very restrictive; for example, all $L$-space knots and all algebraic links are special $L$-space links.
\begin{proposition}
\label{algebralinks}
Every algebraic link is a special $L$-space link. In particular, this holds for positive, coherently oriented torus links $T_{p,q}$ for every $1\leq p\leq q$.
\end{proposition}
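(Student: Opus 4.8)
The plan is to compute the $H$-function of an algebraic link from the value semigroup of its plane curve singularity and to read the three defining equations off the conductor and the Gorenstein symmetry of that semigroup.

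First I would recall the standing facts: if $\L=\bigcup_{i=1}^n L_i$ is the link of a plane curve singularity $C=\bigcup_{i=1}^n C_i$, then $\L$ is fibered (Milnor) and is an $L$-space link, and its $H$-function is expressed through the multivariate Hilbert function $\mathfrak h$ of the local ring $\mathcal O_{C,0}$, with $\mathfrak h$ governed by the value semigroup $S\subseteq\Z_{\geq0}^n$ recording intersection multiplicities along the branches $C_i$ \cite{GN}. Let $\bm c=(c_1,\dots,c_n)$ be the conductor of $S$, i.e. the minimal point with $\bm c+\Z_{\geq0}^n\subseteq S$. The first step is to show that $H_\L$ vanishes exactly on the up-set $\{\bm s'\succeq \bm c/2\}$: the saturation of $\mathfrak h$ past the conductor gives the vanishing on this cone, while the fact that $H_\L$ drops by at most $1$ under each $\bm s\mapsto\bm s-\bm e_i$ rules out extra zeros off the cone. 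Thus $\L$ is type (B) with distinguished point $\bm s=\bm c/2$.

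With $\bm s=\bm c/2$ identified, equation \eqref{equ1} becomes the classical conductor--delta relation $c_i=2\delta_i+\sum_{j\neq i}(C_i\cdot C_j)$, combined with $\delta_i=g_3(L_i)$ (the Seifert genus of an algebraic knot is the $\delta$-invariant of its branch) and $C_i\cdot C_j=\lk(L_i,L_j)$; dividing by $2$ yields $s_i=g_3(L_i)+\tfrac12\lk(L_i,\L\setminus L_i)$. Equation \eqref{equ3} I would obtain from the fact that a plane curve singularity is Gorenstein, so $S$ is symmetric about the half-conductor $\bm c/2=\bm s$; the substitution $\bar x_i=\infty$ for $x_i\geq s_i$ precisely records that, beyond the conductor in the $i$-th coordinate, $\mathfrak h$ stabilizes to the Hilbert function of the sublink $\L\setminus L_i$ (again algebraic), so that reflecting about $\bm s$ matches $H_\L$ with these stabilized sublink values throughout the region $\sum_i x_i\geq\sum_i s_i-n$.

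Equation \eqref{equ2} I would deduce last, from fiberedness. Since $\L$ is fibered, $\widehat{HFL}(\L,\bm s)$ has rank $1$ in the top Alexander grading $\bm s$; for an $L$-space link this rank equals the inclusion--exclusion sum $\bigl|\sum_{\varepsilon\in\{0,1\}^n}(-1)^{|\varepsilon|}H_\L(\bm s-\varepsilon)\bigr|$. The vanishing on the cone kills the $\varepsilon=\bm 0$ term, the terms with $\varepsilon\neq\bm 0,\bm 1$ are pinned down by the sublink reduction of \eqref{equ3}, and the Lipschitz bound $H_\L(\bm s-\bm 1)\leq H_\L(\bm s-\bm e_i)+1$ forces the remaining value to be exactly $H_\L(\bm s-\bm 1)=1$. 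The main obstacle is \eqref{equ3}: one must translate the algebraic Gorenstein symmetry of $\mathfrak h$ into the exact combinatorial form above, correctly matching the $\infty$-convention with the sublink stabilization and verifying the identity on the whole region rather than at a single corner; establishing that the vanishing locus is precisely the cone is the secondary technical point. Finally, for the stated special case, $T_{p,q}$ with $1\leq p\leq q$ is the link of the singularity $\{x^p=y^q\}$ and is coherently oriented by its complex orientation, hence algebraic, so the general result applies; here $S$ is generated along each of the $d=\gcd(p,q)$ branches by $p/d$ and $q/d$, the pairwise intersection multiplicity is $pq/d^2$, and $\delta_i=\tfrac12(p/d-1)(q/d-1)$, making all three equations explicit.
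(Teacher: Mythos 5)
Your overall route---compute the $H$-function of an algebraic link through the Gorsky--N\'emethi identification with the semigroup/Hilbert function of the singularity, put the distinguished point at half the conductor, and get \eqref{equ1} from $c_i=2\delta_i+\sum_{j\neq i}C_i\cdot C_j$, $\delta_i=g_3(L_i)$ and $C_i\cdot C_j=\lk(L_i,L_j)$---is viable and close in spirit to the paper, which also works entirely inside \cite{GN} but cites its combinatorial lemmas (Lemma 3.6 and Paragraph 7.1.A) rather than the algebraic model. Two attributions in your sketch are off, though the claims themselves are right: the stabilization \eqref{equ3} does not come from Gorenstein symmetry but from the trivial fact that in the Hilbert function $\mathfrak h(\bm{v})=\dim\mathcal O/J(\bm{v})$ a coordinate $v_i\leq 0$ imposes a vacuous valuation condition (so stabilization holds everywhere, as the paper also proves); and the Lipschitz property of Lemma \ref{growth control} cannot by itself rule out zeros off the cone---that needs monotonicity together with the sublink reduction and Example \ref{knotgenus}, or simply $J(\bm{v})=\mathcal O\iff\bm{v}\preceq\bm 0$ in the algebraic model.

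The genuine gap is in your derivation of \eqref{equ2}. The assertion that for an $L$-space link the rank of $\widehat{HFL}$ at the top multi-degree equals $\bigl|\sum_{\varepsilon}(-1)^{|\varepsilon|}H_{\L}(\bm{s}-\varepsilon)\bigr|$ is false as a general statement: that sum is the Euler characteristic $\chi(HFL^-(\L,\bm{s}))$, and rank can exceed $|\chi|$ (already Case 6 of Figure \ref{figure 1} gives a two-dimensional $HFL^-(\L,\bm{v})$ with $\chi=0$). To use it at $\bm{s}$ you must first prove (i) $HFL^-(\L,\bm{s}+\bm{e}_B)=0$ for all $\emptyset\neq B$, so that Proposition \ref{spectral sequence 2} identifies $\widehat{HFL}(\L,\bm{s})\cong HFL^-(\L,\bm{s})$, and (ii) that $\bm{s}$ is the unique multi-degree on the hyperplane $\sum_iv_i=s_{\textup{top}}$ with nonzero homology, since Milnor fiberedness only gives rank one in the \emph{collapsed} grading; both are exactly the spectral-sequence arguments in the paper's proof of Theorem \ref{stypeb}, which in turn rest on \eqref{equ3}, so they are available to you but constitute the substantive content you skipped. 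In addition, for $n\geq3$ your bound $H_{\L}(\bm{s}-\bm{1})\leq H_{\L}(\bm{s}-\bm{e}_i)+1$ is invalid (one loses $1$ per coordinate); you need the bound from a codimension-one corner $H_{\L}(\bm{s}-\bm{1})\leq H_{\L}(\bm{s}-\bm{1}+\bm{e}_j)+1$, and pinning those corner values to $1$ requires induction on sublinks, which you do not mention. The irony is that in the model you set up, \eqref{equ2} has a one-line proof with no Floer theory at all: $H_{\L}(\bm{s}-\bm{1})$ corresponds to $\mathfrak h(\bm{1})=\dim\mathcal O/\mathfrak m=1$, because a germ vanishes at the singular point if and only if all its valuations are at least $1$; the paper instead deduces it combinatorially from \cite[Lemma 3.6 and Paragraph 7.1.A]{GN}.
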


Furthermore, suppose $\L=L_1\cup \cdots \cup L_{n}$ is a special $L$-space link; let $p_i, q_i$ be coprime positive integers where $1\leq i \leq n$, and let $L_{p_{i}, q_{i}}$ denote the $(p_{i}, q_{i})$-cable of $L_{i}$. Then the link $\L_{cab}=L_{p_{1}, q_{1}}\cup \cdots \cup L_{p_{n}, q_{n}}$ is also an $L$-space link if $q_{i}/p_{i}$ is sufficiently large for each $1\leq i \leq n$, \cite[Proposition 2.8]{BG}, and it is special \cite[Theorem 4.6]{Liu}.

It is in general very hard to study the type (A) links, but we will concentrate on 2-component type (B) $L$-space links in this paper. Note that the definitions of type (A) and (B) for $L$-space links are slightly different from the ones in \cite{Liu}. This is simply for different purposes.

\begin{theorem}
\label{stypeb}
Suppose that $\L$ is a special $L$-space  link. Then $\L$ is fibered and strongly quasi-positive.
\end{theorem}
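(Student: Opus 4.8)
The plan is to translate both conclusions into statements about the link Floer homology $\widehat{HFL}(\L)$, which for an $L$-space link is completely determined by the $H$-function, and then invoke two detection principles: Ni's theorem that link Floer homology detects fiberedness through its top Alexander grading, and a Bennequin/contact-type characterization of strong quasi-positivity for fibered links. Throughout I would use the finite-difference expression
\[
\mathrm{rk}\,\widehat{HFL}(\L,\bm{s})=\Big|\sum_{B\subseteq\{1,\dots,n\}}(-1)^{|B|}H_{\L}(\bm{s}-\bm{e}_B)\Big|,\qquad \bm{e}_B=\sum_{i\in B}e_i,
\]
which for an $L$-space link computes the ranks of $\widehat{HFL}$ near the top vertex (all I will need), together with the fact that the Maslov gradings of these generators are read off from $H_{\L}$ via the $U$-tower structure of $HFL^-$.

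\emph{Fiberedness.} First I would locate the top of $\widehat{HFL}$. Since $\L$ is type (B), the zero-set of $H_{\L}$ is exactly the up-set $\{\bm{s}'\succeq\bm{s}\}$, so $\bm{s}$ is the unique vertex of the support in the all-positive direction; equation \eqref{equ1} identifies the coordinates of $\bm{s}$ with $g_3(L_i)+\lk(L_i,\L\setminus L_i)/2$, i.e. with a minimal-genus Seifert surface, so this vertex realizes the Thurston norm. Evaluating the displayed formula at $\bm{s}$: the $B=\varnothing$ term is $H_{\L}(\bm{s})=0$, monotonicity of $H_{\L}$ (it drops by $0$ or $1$ under each unit increase) together with type (B) pins every term with $\varnothing\neq B\neq\{1,\dots,n\}$ between $0$ and $1$, and equation \eqref{equ2} gives the full term $H_{\L}(\bm{s}-\bm{1})=1$; a short telescoping computation (for $n=2$ this is simply $|1-1-1+0|$) yields rank $1$, while the same formula evaluated just beyond $\bm{s}$ gives $0$. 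Hence the top Alexander grading of $\widehat{HFL}(\L)$ has rank one, and by Ni's fiberedness criterion $\L$ is fibered, with fiber a minimal-genus Seifert surface.

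\emph{Strong quasi-positivity.} Being fibered, $\L$ is the binding of an open book of $S^3$ and carries a canonical contact structure $\xi_{\L}$. I would use the characterization that a fibered link is strongly quasi-positive precisely when this open book supports the tight contact structure $\xi_{\mathrm{st}}$ — equivalently, when the associated $\tau$-type invariant attains the slice--Bennequin bound $\tau(\L)=g_3(\L)+\tfrac{n-1}{2}$ (the link analogue of Hedden's theorem). To verify the bound I would compute the Maslov grading of the rank-one top generator from $H_{\L}$: equation \eqref{equ1} places it in the maximal Alexander grading allowed by the genus, and equation \eqref{equ3} — which says that near the top $H_{\L}$ is unchanged when coordinates already above threshold are pushed to $\infty$, so that it reduces to sublink data and is symmetric under conjugation — forces the generator into Maslov grading $0$, the bottom of its $U$-tower. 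This puts the contact/transverse invariant at the generator of $\widehat{HF}(-S^3)$, so $\xi_{\L}$ is tight, hence $\xi_{\mathrm{st}}$, and $\L$ is strongly quasi-positive.

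The main obstacle is the second half. The finite-difference rank computation and Ni's criterion make fiberedness essentially bookkeeping, but strong quasi-positivity rests on two harder inputs: the precise dictionary between the defining equations of a special link and the Maslov grading of the top generator — where equation \eqref{equ3} must be leveraged both to cancel all the correction terms coming from the $U$-tower shifts and to guarantee the conjugation symmetry of the complex — and the converse positivity statement, namely that a fibered link whose canonical open book supports $\xi_{\mathrm{st}}$ (equivalently whose $\tau$ is maximal) is genuinely strongly quasi-positive. The latter is the real contact/$4$-dimensional ingredient; for knots it is Hedden's theorem, and I would need its link generalization, either through the sutured/LOSS contact invariant or through a sharp slice--Bennequin inequality for links, to close the argument.
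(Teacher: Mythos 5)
Your overall strategy (read the top of $\widehat{HFL}$ off the $H$-function, apply the Ghiggini--Ni fiberedness criterion, then the fibered/strongly-quasi-positive characterization via $\tau$) is the same as the paper's, but the fiberedness step has a genuine gap: you conflate Euler characteristic with rank. Your displayed formula is (up to sign) equation \eqref{computation of h-function 1} of the paper, which computes $\chi(HFL^{-}(\L,\bm{s}))$, not $\dim_{\F}\widehat{HFL}(\L,\bm{s})$; for $L$-space links the rank can strictly exceed $|\chi|$, as in Case 6 of Section \ref{sec:computation}, where $HFL^{-}(\L,\bm{s})=\F[-2h]\oplus\F[-2h-1]$ has $\chi=0$ but rank $2$. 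So the statement that the formula ``evaluated just beyond $\bm{s}$ gives $0$'' proves nothing about vanishing of the groups there. Moreover, since $\widehat{HFL}(\L,s)=\bigoplus_{|\bm{v}|=s}\widehat{HFL}(\L,\bm{v})$, to identify $s_{\textup{top}}$ and get rank one you must kill $\widehat{HFL}(\L,\bm{s}')$ at \emph{every} lattice point with $\sum_j s'_j\geq\sum_j s_j$ and $\bm{s}'\neq\bm{s}$, including points not comparable to $\bm{s}$ (e.g.\ with one coordinate far above $s_i$ and another far below $s_j$), where $H_{\L}$ need not vanish. This is exactly where the paper uses the stabilization condition \eqref{equ3}: it forces $H_{\L}(\bm{s}'-\bm{e}_B-\bm{e}_i)=H_{\L}(\bm{s}'-\bm{e}_B)$ for the coordinate $i$ with $s'_i>s_i$, so the $E_1$-page of the spectral sequence of Proposition \ref{spectral sequence 1} splits off an acyclic tensor factor and $HFL^{-}(\L,\bm{s}')=0$; Proposition \ref{spectral sequence 2} then transfers this to $\widehat{HFL}$. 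In your write-up \eqref{equ3} is only invoked later, vaguely, for ``conjugation symmetry,'' so the key use of the hypothesis is missing.

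The strong quasi-positivity half also rests on an unproved step. You are right that the needed detection theorem exists --- it is Theorem \ref{thm:fibered_SQP} ([Cavallo20]), stated with the bound $\tau(\L)=g_3(\L)+n-1$ (your $g_3(\L)+\tfrac{n-1}{2}$ is the wrong normalization for this paper's $\tau$). But to apply it you must actually prove $\tau(\L)=s_{\textup{top}}$, and your route --- ``the generator has Maslov grading $0$, hence the contact invariant of the open book is the generator of $\widehat{HF}(-S^3)$, hence $\xi_{\L}$ is tight'' --- is an appeal to contact-geometric machinery that is neither set up nor proved here, and whose link-level version is essentially the content of the theorem you are trying to invoke. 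The paper closes this gap purely algebraically with Lemma \ref{lemma:hat}: for a fibered link, a cycle in $\widehat{A}_0(\L,s_{\textup{top}})$ generating $\widehat{HFL}_0(\L,s_{\textup{top}})$ must survive as a generator of $H_0$ of the total complex, because any boundary relation could be pushed down through the Alexander filtration and would contradict $\widehat{HFL}_0(\L,s_{\textup{top}})\neq0$; this yields $\tau(\L)=s_{\textup{top}}=g_3(\L)+n-1$ directly. To repair your proof, replace the Euler-characteristic bookkeeping by the two spectral-sequence computations (vanishing off $\bm{s}$, and $HFL^{-}(\L,\bm{s})=\widehat{HFL}(\L,\bm{s})=\F_0$), and replace the contact argument by a filtration argument of the type in Lemma \ref{lemma:hat}.
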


We do not know whether in general the converse holds or not, but for 2-component $L$-space links, we have the following theorem.

\begin{theorem}
\label{2com}
A 2-component $L$-space link is fibered and strongly quasi-positive if and only if it is special. 
\end{theorem}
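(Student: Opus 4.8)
The ``if'' direction is exactly Theorem~\ref{stypeb}: a special $L$-space link (of any number of components) is fibered and strongly quasi-positive. So the content is the converse, and I would set it up as follows. Let $\L=L_1\cup L_2$ be a $2$-component $L$-space link that is fibered and strongly quasi-positive, and write $\ell=\lk(L_1,L_2)$ and $g_i=g_3(L_i)$. Since every sublink of an $L$-space link is again an $L$-space link, each $L_i$ is an $L$-space knot, hence a fibered strongly quasi-positive knot whose $H$-function is the standard staircase determined by $g_i$; in particular $H_{L_i}(s)=0\iff s\ge g_i$ and $H_{L_i}(g_i-1)=1$. I would also record the stabilization property $\lim_{s_2\to+\infty}H_\L(s_1,s_2)=H_{L_1}(s_1-\tfrac\ell2)$ (and symmetrically in $s_1$), so that the ``outer edges'' of the zero region $Z=\{H_\L=0\}$ are controlled by the components: the smallest $s_1$-coordinate occurring in $Z$ is $g_1+\tfrac\ell2$ and the smallest $s_2$-coordinate is $g_2+\tfrac\ell2$.

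The next step is the dictionary between $\widehat{HFL}$ and $H_\L$. For an $L$-space link the link Floer homology is determined by the $H$-function through inclusion--exclusion \cite{GN}; for two components this reads
\[
\operatorname{rank}\widehat{HFL}(\L,\bm s)=\bigl|H_\L(\bm s)-H_\L(\bm s-\bm e_1)-H_\L(\bm s-\bm e_2)+H_\L(\bm s-\bm e_1-\bm e_2)\bigr|,
\]
with everything supported in a single Maslov grading. Because $Z$ is upward closed and $H_\L$ drops by $0$ or $1$ along each edge, the support of $\widehat{HFL}$ lies on the staircase boundary of $Z$, and a convex corner $\bm c$ of $Z$ contributes a generator exactly when $H_\L(\bm c-\bm 1)=1$. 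In this language $\L$ is of type (B) precisely when $Z$ is a single quadrant, in which case its corner must be $(g_1+\tfrac\ell2,\,g_2+\tfrac\ell2)$ by the edge computation of the first paragraph, so that equation~\eqref{equ1} is automatic, while equation~\eqref{equ2} is exactly the statement that this corner carries a (rank-one) generator.

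Now I would feed in the two hypotheses. Fiberedness, via Ni's detection of fiberedness by link Floer homology \cite{Ni,Ghiggini}, says that the top Alexander grading (the maximal value of $A_1+A_2$, corresponding to the fiber) is $\Z$: there is a \emph{unique} generator of maximal total grading and it has rank one. Strong quasi-positivity is then used to place it symmetrically: I would invoke the fact that a fibered link is strongly quasi-positive if and only if its open book supports the tight contact structure $\xi_{\mathrm{std}}$ on $S^3$, equivalently the nonzero contact invariant $c(\xi_{\mathrm{std}})$ is carried by the top generator \cite{Hedden}, which makes the self-linking (slice--Bennequin) bound sharp. This should force the maximal total grading to equal $g_1+g_2+\ell$ and the top generator to sit at $(g_1+\tfrac\ell2,\,g_2+\tfrac\ell2)$, the only candidate for a single corner of $Z$. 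The delicate point is that this does not yet exclude a \emph{second} convex corner of higher total grading: such a corner would be invisible to the top group whenever $H_\L$ equals $2$ rather than $1$ just inside it, so the sharp-grading bound alone does not rule it out.

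The two remaining tasks -- excluding such hidden corners (to conclude type (B)) and proving the symmetry \eqref{equ3} -- are where I expect the real work to lie. For both I would descend to the sutured Floer homology of the complement of a fiber surface: strong quasi-positivity exhibits this fiber as a quasi-positive surface, and I would argue that the associated sutured groups have rank one in a full range of gradings, not merely at the top, which simultaneously forbids additional corners and yields the decoupling $H_\L(x_1,x_2)=H_\L(\bar x_1,\bar x_2)$ for $x_1+x_2\ge s_1+s_2-2$. Here the $L$-space hypothesis is essential, since it collapses $\widehat{HFL}$ to the single-Maslov-grading model governed by $H_\L$, so rank-one control of the sutured groups translates directly into the numerical identity \eqref{equ3}, with the conjugation symmetry of $H_\L$ supplying the reflection $x_i\mapsto\bar x_i$. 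Combining these steps shows $\L$ is special and completes the converse.
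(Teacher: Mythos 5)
The ``if'' direction is indeed Theorem~\ref{stypeb}, and your reduction of Equation~\eqref{equ1} to the stabilization of the $H$-function (Lemma~\ref{infinity} plus Example~\ref{knotgenus}) is sound; your appeal to the contact-geometric characterization of fibered strongly quasi-positive links is essentially the paper's Theorem~\ref{thm:fibered_SQP} in disguise, so that part is fine too. But the converse, which is the actual content, has two genuine gaps. First, your ``dictionary'' is false as stated: for a $2$-component $L$-space link, $HFL^-(\L,\bm{s})$ need not be supported in a single Maslov grading, and its rank is not the absolute value of the inclusion--exclusion expression. Case 6 of Subsection~\ref{sec:computation} gives $HFL^{-}(\L,\bm{s})\cong\F[-2h]\oplus\F[-2h-1]$, whose Euler characteristic is zero. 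This is not a technicality; it is exactly the phenomenon that makes your ``hidden corners'' invisible to rank counts, as you yourself observe. Second, and decisively, the two steps that constitute the theorem --- excluding hidden corners (type (B)) and establishing Equation~\eqref{equ3} --- are precisely the ones you defer to an undeveloped sutured Floer homology argument (``I would argue that the associated sutured groups have rank one in a full range of gradings''). No such statement is proved, made precise, or reduced to a citable result, so the proposal is incomplete exactly where the work lies.

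For comparison, the paper closes these gaps by tracking Maslov gradings rather than ranks. From Theorem~\ref{thm:fibered_SQP} and Lemma~\ref{lemma:hat}, fiberedness plus strong quasi-positivity force $\tau(\L)=s_{\textup{top}}$ and the unique top class to be $\widehat{HFL}(\L,\bm{s})=\F_0$, i.e.\ concentrated in Maslov grading $0$ at a single lattice point $\bm{s}$. A maximality argument with the spectral sequence of Proposition~\ref{spectral sequence 2} then shows $HFL^{-}(\L,\bm{s}')=0$ for every $\bm{s}'\succ\bm{s}$: a maximal nonvanishing point $\bm{v}$ would satisfy $\widehat{HFL}(\L,\bm{v})=HFL^{-}(\L,\bm{v})\neq0$, contradicting the maximality of $s_{\textup{top}}$. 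Next, the explicit six-case local computation of Subsection~\ref{sec:computation} together with Remark~\ref{remark:total} forces $H_{\L}(\bm{s})=0$ (otherwise the generator of $HFL^{-}(\L,\bm{s})$ would have Maslov grading at most $-1$) and pins down the local shape of the $H$-function at $\bm{s}$: Case 2 of Figure~\ref{F2} is excluded because it yields $\F_1$, leaving Case 1, which is Equation~\eqref{equ2}. Propagating this local computation along the two edges $\{s_1-1\}\times\{s_2'>s_2\}$ and $\{s_1'>s_1\}\times\{s_2-1\}$ rules out any further corner and proves type (B). Finally, Equation~\eqref{equ3} follows from the same vanishing argument: a failure of stabilization would produce $\bm{v}$ with $v_1+v_2\geq s_1+s_2$ and $\widehat{HFL}(\L,\bm{v})\neq0$, contradicting either the maximality of $s_{\textup{top}}$ or the rank-one condition of Theorem~\ref{thm:fibered}. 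In short, where you invoke hypothetical rank-one control of sutured invariants, the paper substitutes Maslov-grading bookkeeping on the two spectral sequences of Propositions~\ref{spectral sequence 1} and~\ref{spectral sequence 2}; that is the idea missing from your proposal.
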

An application of Theorem \ref{stypeb} is a proof of the following corollary, see \cite{Cavallo18,Cavallo19} for the definitions of $\tau$ and $\nu^+$ for links.
\begin{corollary}
 \label{cor:genus}
 Suppose that $\L$ is a special $L$-space link. Then we have \[g_3(\L)+n-1=\sum_{i=1}^ng_3(L_i)+\sum_{i<j}\lk(L_i,L_j)=\tau(\L)=\nu^+(\L)\:.\]
 In particular, when $\L$ has two unknotted components this equation implies $\lk(L_1,L_2)>0$. 
\end{corollary}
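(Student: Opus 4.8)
The plan is to read off all four quantities from the two conclusions of Theorem \ref{stypeb}, that $\L$ is fibered and strongly quasi-positive, together with the defining equations of a special $L$-space link. The starting observation is that the middle term is nothing but $\sum_{i=1}^n s_i$: summing equation \eqref{equ1} over $i$ and using $\lk(L_i,\L\setminus L_i)=\sum_{j\neq i}\lk(L_i,L_j)$ gives
\[
\sum_{i=1}^n s_i=\sum_{i=1}^n g_3(L_i)+\sum_{i<j}\lk(L_i,L_j)\:.
\]
So it suffices to establish the two equalities $g_3(\L)+n-1=\sum_i s_i$ and $\sum_i s_i=\tau(\L)=\nu^+(\L)$.

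For the genus identity I would use that $\L$ is fibered and non-split, so that its Seifert genus is detected by link Floer homology: by Ozsv\'ath--Szab\'o's Thurston-norm detection, evaluated on the class Poincar\'e dual to a Seifert surface, one has $\max\{\sum_i s_i':\widehat{HFL}(\L,\bm s')\neq 0\}=g_3(\L)+n-1$. It then remains to locate this maximal grading at the distinguished point $\bm s$. Since $\L$ is an $L$-space link, $\widehat{HFL}$ is recovered from $H_\L$ by an inclusion--exclusion (iterated difference) formula, and the type (B) condition together with the stabilization of $H_\L$ in each coordinate forces the top of the support to be $\bm s$, while equation \eqref{equ2} (with \eqref{equ3}) ensures $\widehat{HFL}(\L,\bm s)\cong\mathbb F$, as expected for a fibered link. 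Hence the top grading is exactly $\bm s$, and $g_3(\L)+n-1=\sum_i s_i$.

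For the concordance invariants I would invoke the strong quasi-positivity. A strongly quasi-positive link bounds a quasi-positive Seifert surface that is genus-minimizing in both $S^3$ and $B^4$, so $g_4(\L)=g_3(\L)$. Cavallo's slice--Bennequin inequalities \cite{Cavallo18,Cavallo19} give $\tau(\L)\le\nu^+(\L)\le g_4(\L)+n-1$, while the maximal self-linking number realized by the strongly quasi-positive braid provides the matching lower bound $\tau(\L)\ge g_3(\L)+n-1$. Combining these with $g_4(\L)=g_3(\L)$ squeezes everything to equality, so $\tau(\L)=\nu^+(\L)=g_3(\L)+n-1=\sum_i s_i$, completing the chain. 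Finally, when $n=2$ and both components are unknots we have $g_3(L_1)=g_3(L_2)=0$, so the first equality reads $g_3(\L)+1=\lk(L_1,L_2)$; as $g_3(\L)\ge 0$, this yields $\lk(L_1,L_2)\ge 1>0$.

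The step I expect to be the main obstacle is the genus identity $g_3(\L)+n-1=\sum_i s_i$: one must check carefully that the top of $\widehat{HFL}$ sits precisely at $\bm s$ rather than at some other extremal point of the support, which is exactly what the type (B) condition and equations \eqref{equ2}--\eqref{equ3} are there to guarantee, and that the Thurston-norm detection genuinely applies (non-splitness). By contrast, once $g_3=g_4$ is in hand the identification of $\tau$ and $\nu^+$ is a direct application of the sharp Bennequin-type inequalities.
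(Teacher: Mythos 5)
Your proof is correct, and for the first two equalities it follows the paper's own route in substance: the paper also obtains the middle term by summing Equation \eqref{equ1}, and identifies $\sum_i s_i$ with $g_3(\L)+n-1$ by recalling from the proof of Theorem \ref{stypeb} that $s_{\textup{top}}=s_1+\cdots+s_n$ and then invoking the classical fact that a fibered $n$-component link has $s_{\textup{top}}=g_3(\L)+n-1$; your appeal to Thurston-norm detection is just a heavier citation for that same fact, and your check that the top of the support sits at $\bm{s}$ is exactly the spectral-sequence argument already carried out inside Theorem \ref{stypeb}, so nothing new is required there. Where you genuinely diverge is the last equality: the paper applies the ``only if'' direction of Theorem \ref{thm:fibered_SQP} to the fibered, strongly quasi-positive link produced by Theorem \ref{stypeb} and gets $\tau(\L)=\nu^+(\L)=g_3(\L)+n-1$ in one line, whereas you re-derive this by squeezing the paper's stated bound $\tau(\L)\leq\nu^+(\L)\leq g_4(\L)+n-1\leq g_3(\L)+n-1$ against a Bennequin-type lower bound coming from a strongly quasi-positive braid representative. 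That unpacking is legitimate (it is essentially how the cited theorem is proved) and it even shows the concordance statement needs no fiberedness; but it imports facts the paper never states, and one of them needs care: the self-linking inequality must be used in its $n$-component normalization $\self(\beta)\leq 2\tau(\L)-n$, in which case $\self(\beta)=-\chi(\Sigma)=2g(\Sigma)+n-2\geq 2g_3(\L)+n-2$ for the Bennequin surface $\Sigma$ yields $\tau(\L)\geq g_3(\L)+n-1$ (and, as a byproduct, forces $g_4(\L)=g_3(\L)$, so Rudolph's theorem is not strictly needed). If instead one used the ``$2\tau-2$'' form quoted in the paper's proof of Proposition \ref{prop:quasi}, which is the $n=2$ specialization, the squeeze would only give $\tau(\L)\geq g_3(\L)+n/2$ and would fail for $n>2$. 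Your ``in particular'' step, that $g_3(\L)\geq0$ forces $\lk(L_1,L_2)\geq1$, is the intended one.
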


It is worth to mention that in general, it is not easy to determine whether a link is an $L$-space link. One can apply Theorem \ref{stypeb} to prove a link is not an $L$-space link. For example, Boileau, Boyer and Gordon studied strongly quasi-positive pretzel links  in \cite{BBG}. In order to check whether a pretzel link is an $L$-space link, one can compute its $H$-function, see \cite{Liu2}, to see whether it is special. For example, the pretzel link $b(-2, 3, 8)$   is not strongly quasi-positive \cite[Proposition 8.1]{BBG}; if we assume that it is an $L$-space link then it is special, which contradicts Theorem \ref{stypeb}. For the computation of its $H$-function, see \cite{Liu3}. 

Consider the family of 2-bridge links $b(4k^{2}+4k, -2k-1)$ where $k$ is a positive integer as in Figure \ref{two bridge}; they are type (A) 2-component $L$-space links. 
We are able to prove the following result.
\begin{proposition}
\label{prop:quasi}
 The family of 2-bridge $L$-space links $b(4k^{2}+4k, -2k-1)$ with $k\geq1$ contains no quasi-positive link. In particular, the Whitehead links are not quasi-positive. 
\end{proposition}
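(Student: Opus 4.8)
The plan is to obstruct quasi-positivity by playing the two concordance invariants $\tau$ and $\nu^+$ against each other. For an arbitrary $n$-component link one has the general chain $\tau(\L)\le\nu^+(\L)\le g_4(\L)+n-1$, where $g_4$ is the smooth slice genus (the middle inequality being the slice-genus bound coming from $\nu^+$, and the first the comparison of the two invariants). On the other hand, if $\L$ is quasi-positive then the slice--Bennequin inequality for $\tau$ is sharp: combining the bound of \cite{Cavallo18,Cavallo19} with the realization of quasi-positive links as transverse boundaries of pieces of complex curves forces $\tau(\L)=g_4(\L)+n-1$. Squeezing, a quasi-positive link must satisfy $\tau(\L)=\nu^+(\L)$. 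Hence it suffices to prove, for every $k\ge1$, the \emph{strict} inequality $\tau(\L_k)<\nu^+(\L_k)$, where $\L_k=b(4k^2+4k,-2k-1)$.

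First I would compute the $H$-function $H_{\L_k}$ explicitly; as these are $2$-bridge $L$-space links this is available from the surgery-formula computations in \cite{Liu2}. The defining feature we will use is that $\L_k$ is type (A), so the vanishing locus of $H_{\L_k}$ admits no single minimal lattice point dominating it, and in particular the symmetry/value normalization \eqref{equ2}--\eqref{equ3} of a special link fails. From $H_{\L_k}$ I would then read off $g_3(\L_k)$ together with $\tau(\L_k)$ and $\nu^+(\L_k)$ via their descriptions for $L$-space links in terms of the $H$-function. The expected picture, already visible for $k=1$, is that $\nu^+(\L_k)$ still attains the top value $g_3(\L_k)+1$ (so that $g_4(\L_k)=g_3(\L_k)$), whereas $\tau(\L_k)$ drops strictly below it; by Corollary \ref{cor:genus} the coincidence $\tau=g_3+n-1$ singles out exactly the special $L$-space links, so the failure of speciality is precisely what opens the gap $\tau<\nu^+$.

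Granting the strict inequality, the conclusion is immediate: any quasi-positive link satisfies $\tau=\nu^+$, so no $\L_k$ is quasi-positive. Finally, setting $k=1$ gives the $2$-bridge link $b(8,-3)$, which is the Whitehead link, yielding the last assertion.

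The step I expect to be the main obstacle is the determination of $\tau(\L_k)$ and $\nu^+(\L_k)$ uniformly in $k$. Unlike in the special case, the $H$-function of a type (A) link has no clean staircase shape, so locating the lattice points that compute the two invariants, and in particular verifying that the gap $\nu^+(\L_k)-\tau(\L_k)$ is positive for \emph{every} $k$ rather than just in low-complexity examples, is the delicate combinatorial core of the argument. A secondary but essential point is to invoke the sharp slice--Bennequin equality for quasi-positive—as opposed to merely strongly quasi-positive—links in the normalization matching Corollary \ref{cor:genus}; this is exactly the input that strengthens the non-strong-quasi-positivity afforded by Theorem \ref{2com} into genuine non-quasi-positivity.
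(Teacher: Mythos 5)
Your reduction of quasi-positivity to the equality $\tau=\nu^+$ is essentially sound for non-split $2$-component links (the quasi-positive Bennequin surface is connected and realizes $\chi_4$, so $2\tau-2=-\chi_4$ squeezes $\tau=\nu^+=g_4+1$). The fatal problem is the other half of your plan: the strict inequality $\tau<\nu^+$ is \emph{false}, and it fails already at $k=1$, i.e.\ for the Whitehead link $L_1$ --- exactly the case the proposition highlights. Indeed $\tau(L_1)=1$, and also $\nu^+(L_1)=1$. To see $\nu^+(L_1)\leq1$: by Figure \ref{Whitehead} one has $H_{L_1}(1,0)=0$, so the generator $z$ of $H_*(A^-(L_1,(1,0)))\cong\F[U]$ lies in Maslov grading $0$ and the inclusion $A^-(L_1,(1,0))\subset CFL^-(L_1)$ sends $[z]$ to the generator of $H_0(CFL^-(L_1))\cong\F$, the inclusion-induced map being multiplication by $U^{H_{L_1}(1,0)}=1$ as in Remark \ref{differential}. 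Moreover the collapsing quotient $q\colon CFL^-\to cCFL^-=CFL^-/(U_1-U_2)$ is injective on homology: in the long exact sequence associated to \[0\longrightarrow CFL^-\xrightarrow{\;U_1-U_2\;}CFL^-\longrightarrow cCFL^-\longrightarrow0\] the first map vanishes on homology, since the actions of $U_1$ and $U_2$ are homotopic. As $q\bigl(A^-(L_1,(1,0))\bigr)\subset cA^-(L_1,1)$, the class $\bigl[q(z)\bigr]\in H_0\bigl(cA^-(L_1,1)\bigr)$ has nonzero image in $H_0(cCFL^-(L_1))$, whence $\nu^+(L_1)\leq1=\tau(L_1)\leq\nu^+(L_1)$. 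The same argument shows $\nu^+(\L)\leq\min\{v_1+\cdots+v_n:\;H_{\L}(\bm{v})=0\}$ for \emph{every} $L$-space link, so for type (A) links $\nu^+$ drops together with $\tau$ instead of staying at $g_3+n-1$; your expected picture ($\nu^+(L_k)=g_3(L_k)+1$ with $\tau$ strictly below) cannot occur. The conceptual slip is that failure of speciality opens a gap between $\tau$ and the \emph{three}-dimensional quantity $g_3+n-1$ (Theorem \ref{2com}, Corollary \ref{cor:genus}), which quasi-positivity does not control; it does not open a gap inside the chain $\tau\leq\nu^+\leq g_4+n-1$ of four-dimensional invariants.

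Because of this, no comparison among $\tau$, $\nu^+$ and slice-genus-type quantities can detect non-quasi-positivity of the Whitehead link, and the paper's proof uses a genuinely different, contact-geometric obstruction which is the idea missing from your proposal: by \cite[Theorem 1.2]{Cavallo18-2}, if $L_k$ were quasi-positive its \emph{maximal self-linking number} would satisfy $\mathrm{SL}(L_k)=2\tau(L_k)-2=2k-2\geq0$; on the other hand $L_k$ has unknotted components and vanishing linking number, so Bennequin's inequality applied componentwise gives $\mathrm{SL}(L_k)\leq2\,\mathrm{SL}(\bigcirc)=-2$, a contradiction for every $k\geq1$ (the mirror images are ruled out separately via the equality $2\tau-2=-\chi_4$). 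To salvage your write-up you would have to replace the $\tau$-versus-$\nu^+$ comparison by this self-linking argument; as it stands, the core step of your proof is not a gap in exposition but a false statement.
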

\begin{remark}
Similarly, one can prove that the Borromean rings are also not quasi-positive. 
\end{remark}
\begin{figure}[H]
\centering
\includegraphics[width=4in]{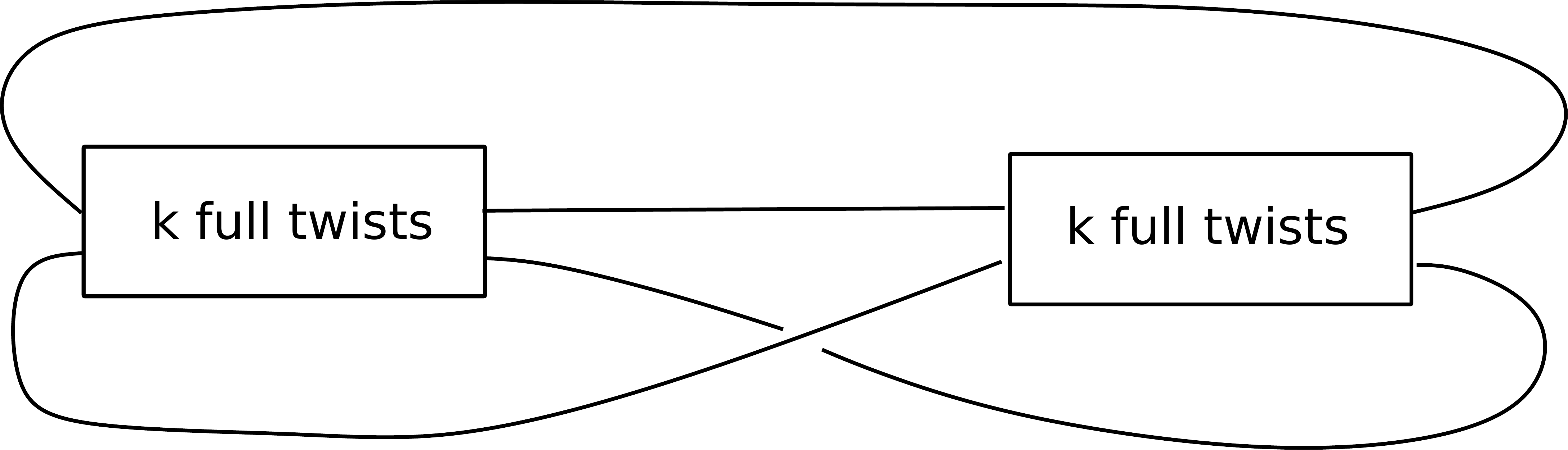}
\caption{ Two-bridge link $b(4k^2+4k, -2k-1).$   \label{two bridge}}
\end{figure}
The key ingredient of the proofs of Theorem \ref{stypeb} and Theorem \ref{2com} is the characterization of strongly quasi-positive fibered links via the $\tau$-invariant proved by the first author in \cite{Cavallo20}:
a fibered link $\L$ in $S^{3}$ is strongly quasi-positive if and only if $\tau(\L)=g_{3}(\L)+n-1$. 

The paper is organized as follows: in Subsection \ref{link floer} and Section \ref{sec:the H-function}, we review link Floer homology, the definitions of $L$-space links and the $H$-function. In Subsection \ref{sec:colla}, we review  the collapsed link Floer complex and the $\tau$ invariant for links. In Section \ref{sec:proof}, we give the proofs of our main results: Theorem \ref{stypeb} and Theorem \ref{2com}. \\

{\bf Acknowledgement.} We are grateful to the Max Planck Institute for Mathematics in Bonn for its hospitality and financial supports. We also want to thank the referee for its suggestions.

\section{Basics of link Floer homology}
\label{back}
\subsection{The link Floer homology groups \texorpdfstring{$HFL^{-}(\L)$}{HFL-(L)} and \texorpdfstring{$\widehat{HFL}(\L)$}{HFLhat(L)}}
\label{link floer}
\OS  and Szab\'o proved that there are chain complexes $CF^{-}(M), \widehat{CF}(M)$ associated to an admissible multi-pointed Heegaard diagram for a closed oriented connected 3-manifold $M$ \cite{OS1}, and these give 3-manifold invariants $HF^{-}(M)$ and $\widehat{HF}(M)$. A null-homologous link $\L=L_{1}\cup \cdots \cup L_{n}$ in $M$ defines a filtration on the chain complex $CF^{-}(M)$. For links in $S^{3}$, this filtration is indexed by an $n$-dimensional lattice $\H(\L)$ which is defined as follows.

\begin{definition}
For an oriented link $\L=L_{1}\cup \cdots \cup L_{n}\subset S^{3}$, define $\H(\L)$ to be the affine lattice over $\Z^{n}$:
\[H(\L)=\bigoplus_{i=1}^{n} \H_{i}(\L), \quad \H_{i}(\L)=\Z+\dfrac{\lk(L_{i}, \L\setminus L_{i})}{2}\]
where $\lk(L_{i}, \L\setminus L_{i})$ denotes the linking number of $L_{i}$ and $\L\setminus L_{i}$. 
\end{definition}

Given $\bm{s}=(s_{1}, \cdots, s_{n})\in \H(\L)$, the \emph{generalized Heegaard Floer complex} $A_*^{-}(\L, \bm{s})\subset CFL_*^{-}(\mathcal L)=CF^{-}(S^{3})$ is the $\F[U_1,...,U_n]$-module defined as the subcomplex of $CF^{-}(S^{3})$ corresponding to the Alexander filtration indexed by $\bm{s}\in\H(\L)$ \cite{MO}. 

We recall that the filtration is increasing in the following sense. Let us consider $\textbf v,\textbf w\in\H(\mathcal L)$; we write $\textbf v\preceq\textbf  w$ if $v_i\leq w_i$ for every $i=1,...,n$. In the same way, we say that $\textbf v\prec\textbf w$ when $\textbf v\preceq \textbf w$ and $\textbf v\neq\textbf w$.
Hence, we have \[A_*^-(\mathcal L,\textbf v)\subset A_*^-(\mathcal L,\textbf w)\] whenever $\textbf v\preceq \textbf w\in\H(\mathcal L)$.

It is known \cite{MO} that the actions of the $U_i$'s on $CFL_*^{-}(\mathcal L)$ are all homotopic; hence, they coincide in the homology group $HF^-(S^3)$, inducing a natural structure of $\F[U]$-module.
Since $A^-(\mathcal L,\textbf s)=CFL^-(\mathcal L)$ when $\textbf s$ is big enough, the total homology of $CFL_*^-(\mathcal L)$ is always isomorphic to $HF^-_*(S^3)\cong\F[U]_0$ for every link. On the other hand, the link Floer homology group $HFL_*^{-}(\L, \bm{s})$ is defined as the homology of the bigraded complex associated to the filtration and it depends on the isotopy type of $\L$; more specifically, we have \[HFL_*^-(\L,\textbf s)=H_*\left(\faktor{A^-(\L,\textbf s)}{\sum_{\textbf u\prec \textbf s}A^-(\L,\textbf u)}\right)\:,\] which is also an $\F[U_1,...,U_n]$-module. For more details, see \cite{MO, SZ3}. Besides the link invariant  $HFL^{-}(\L, \bm{s})$,  Ozsv\'ath and Szab\'o also associated the multi-graded link invariant $\widehat{HFL}(\L, \bm{s})$ to links $\L\subset S^{3}$ which  is defined as follows \cite{ Hom, SZ3}:
\[\widehat{HFL}(\L, \bm{s})=H_{\ast}\left(A^{-}(\L, \bm{s})/\left[\sum_{i=1}^{n}A^{-}(\bm{s}-\bm{e}_{i})\oplus \sum_{i=1}^{n}U_{i}A^{-}(\bm{s}+\bm{e}_{i})\right]\right)\:.\] 
The latter is a finite dimensional $\F$-vector space.

\subsection{The collapsed Alexander filtration}
\label{sec:colla}
Following the notation in \cite{Cavallo18}, we introduce the complex $cCFL_*^-(\L)$ by collapsing the variables $U_1,...,U_n$ in $CFL^-_*(\L)$ to $U$. The Alexander filtration is also collapsed accordingly and we then obtain the subcomplexes $cA_*^-(\L,s)$, where $s=s_1+...+s_n\in\Z$. Obviously, the filtration is still increasing.  

The complex $cCFL^-(\L)$ can be identified with $CF^-(S^3,n)$ and its differential is also gotten from $CFL^-(\L)$ by collapsing the $U_i$'s to $U$. For this reason we have that its total homology is now isomorphic to $\F[U]^{2^{n-1}}$ for every link with $n$-components; more specifically, one has \[H_*(CF^-(S^3,n))\cong HF_{*-\frac{n-1}{2}}^-((S^1\times S^2)^{\#(n-1)},\mathfrak t_0)\cong HF_0^-(S^3)\otimes\left(\F[U]_{-1}\oplus\F[U]_0\right)^{\otimes\:n-1}\] with $\mathfrak t_0$ being the unique torsion Spin$^c$ structure on $(S^1\times S^2)^{\#(n-1)}$, see \cite{Cavallo18,Cavallo19,SZ3}. Using this complex the first author in \cite{Cavallo18} defined the invariant $\nu^+(\L)$, a link version of the invariant from \cite{HW}, as the minimal integer $s$ such that the inclusion \[H_0\left(cA^-(\L,s)\right)\lhook\joinrel\longrightarrow HF_0^-(S^3)\otimes\left(\F[U]_{-1}\oplus\F[U]_0\right)^{\otimes\:n-1}\] is non-trivial, meaning its image is not $\{0\}$. We recall that \[\dim_{\F[U]}HF^-(S^3)=\dim_{\F[U]}HF^-_0(S^3)=\dim_{\F}\widehat{HF}(S^3)=\dim_{\F}\widehat{HF}_0(S^3)=1\:;\] and $\dim_{\F[U]}$ denotes the rank of the free part of an $\F[U]$-module.
Later, in \cite{Cavallo19} it was proved that $\nu^+(\L)$ is a concordance link invariant.

The last homology group that we define in this paper is the hat version of of $cHFL^-(\L)$. We say that $\widehat{CFL}_*(\L)$ is the complex obtained from $CFL^-_*(\L)$ by setting $U_1=...=U_n=0$, which means it is a finite dimensional $\F$-vector space, together with the (collapsed) Alexander filtration on it, denoted by $\widehat A_*(\L,s)$. In other words, one has \[\widehat{CFL}(\L)=\widehat{CF}(S^3,n)=\widehat{CF}((S^1\times S^2)^{\#(n-1)})\:.\]
As in \cite{Cavallo18} we recall that the invariant $\tau(\L)$ is the minimal $s\in\Z$ such that the inclusion \[H_0\left(\widehat A(\L,s)\right)\lhook\joinrel\longrightarrow H_0\left(\widehat{CFL}(\L)\right)=\widehat{HF}_0(S^3)\otimes\left(\F_{-1}\oplus\F_0\right)^{\otimes\:n-1}\] is non-trivial.
The integer $\tau(\L)$ is also a concordance invariant; moreover, we have the following lower bound for the slice genus: \[\tau(\L)\leq\nu^+(\L)\leq g_4(\L)+n-1\leq g_3(\L)+n-1\:.\] The following result was proved in \cite{Cavallo20}. We recall that a link is said strongly quasi-positive if it can be written as closure of the composition of $d$-braids of the form 
\[(\sigma_i\cdot\cdot\cdot\sigma_{j-2})\sigma_{j-1}(\sigma_i\cdot\cdot\cdot\sigma_{j-2})^{-1}\quad\text{ for some }\quad d\geq j\geq i+2\geq3\] or \[\sigma_i\quad\text{ for }\quad i=1,...,d-1\:,\]
where $\sigma_1,...,\sigma_{d-1}$ are the Artin generators of the $d$-braids group.
\begin{theorem}\emph{\cite[Theorem 1.1]{Cavallo20}}
 \label{thm:fibered_SQP}
 A fibered link $\L$ in $S^3$ is strongly quasi-positive if and only if $\tau(\L)=\nu^+(\L)=g_3(\L)+n-1$, where $g_3(\L)$ is the Seifert genus of $\L$.
\end{theorem}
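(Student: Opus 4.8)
The plan is to reduce the biconditional to the single extremal equality $\tau(\L)=g_3(\L)+n-1$: granting the chain $\tau(\L)\leq\nu^+(\L)\leq g_4(\L)+n-1\leq g_3(\L)+n-1$ recalled above, once $\tau(\L)$ attains the top value $g_3(\L)+n-1$ every inequality collapses to an equality, and conversely the stated triple equality already contains it. Thus it suffices to prove that a fibered link is strongly quasi-positive exactly when $\tau(\L)=g_3(\L)+n-1$. I would pass through the open book decomposition of $S^3$ carried by the fibration, its compatible contact structure $\xi$, and the contact class $c(\xi)$, which I place in the top filtration level of the collapsed complex $\widehat{CFL}(\L)$, splitting the argument into a Floer-homological equivalence and a contact-geometric one.

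The Floer-homological step is $\tau(\L)=g_3(\L)+n-1 \iff c(\xi)\neq 0$. For a fibered link the fiberedness detection of link Floer homology shows that the top nonzero Alexander group of $\widehat{HFL}$ is one-dimensional, generated by the contact class; after collapsing, this class sits exactly in filtration level $g_3(\L)+n-1$. Since $\tau(\L)$ is by definition the minimal level at which the inclusion $H_0(\widehat A(\L,s))\hookrightarrow H_0(\widehat{CFL}(\L))$ is nontrivial, the equality $\tau(\L)=g_3(\L)+n-1$ is equivalent to the survival of this top generator, i.e.\ to $c(\xi)\neq 0$; by Ozsv\'ath--Szab\'o's properties of the contact invariant this forces $\xi$ to be tight, hence the standard structure $\xist$ on $S^3$.

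The contact-geometric step is: $\L$ is strongly quasi-positive $\iff$ the fibration supports $\xist$. Forward, Rudolph's characterization of quasi-positive fibered links identifies strong quasi-positivity with the fiber $F$ being a quasi-positive surface, equivalently with the monodromy being a product of right-handed Dehn twists; by Giroux the resulting open book then supports the tight contact structure, which on $S^3$ is $\xist$. Backward, that a fibration supporting $\xist$ forces $F$ to be quasi-positive, and hence $\L$ strongly quasi-positive, is the link analogue of Hedden's theorem for fibered knots. Concatenating the two equivalences proves the statement.

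The hard part is this backward contact-geometric implication, together with the bookkeeping that pins down the top of the collapsed filtration. On the Floer side one must keep track of the grading shift $n-1$ and the factor $(\F[U]_{-1}\oplus\F[U]_0)^{\otimes(n-1)}$ coming from $H_*(CF^-(S^3,n))\cong HF^-((S^1\times S^2)^{\#(n-1)})$, so that the one-dimensional top group of the \emph{multigraded} theory is correctly matched with the \emph{collapsed} invariant $\tau$ at the value $g_3(\L)+n-1$; the normalization $\self(\L)\leq 2\tau(\L)-n$ and the identity $\self=-\chi(F)=2g_3(\L)+n-2$ for the binding of a positive open book are the quantitative shadow of this matching. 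On the contact side the obstacle is upgrading Hedden's knot statement---fibered and strongly quasi-positive $\iff$ supports the tight contact structure---to $n$-component links, generalizing Rudolph's equivalence between quasi-positive fiber surfaces and positive open-book monodromies from knots to links. Everything else is routine bookkeeping against the inequality chain.
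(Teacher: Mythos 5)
You should first note that the paper does not prove this statement at all: it is imported verbatim from \cite[Theorem 1.1]{Cavallo20} and used as a black box in the proofs of Theorems \ref{stypeb} and \ref{2com}, so the only meaningful comparison is with the proof in that reference. Your high-level decomposition does mirror it: reduce to the single equality $\tau(\L)=g_3(\L)+n-1$ via the inequality chain, relate the top of the collapsed filtration to the contact class $c(\xi)$ of the open book determined by the fibration, and combine with a link version of Hedden's theorem \cite{Hedden} identifying ``supports $\xist$'' with strong quasi-positivity.

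However, as a proof your proposal has two genuine gaps. First, in the forward contact-geometric step you assert that quasipositivity of the fiber is ``equivalently'' the condition that the monodromy factors into right-handed Dehn twists, and then invoke Giroux. No such equivalence is available: a positive factorization gives Stein fillability and hence $\xist$ on $S^3$, but the converse direction (quasipositive fiber surface $\Rightarrow$ positive factorization of its monodromy) is not a theorem of Rudolph or anyone else, and whether open books supporting $\xist$ must admit positive factorizations is a notoriously delicate question. The actual argument avoids this entirely: the implication from strong quasi-positivity to $\tau(\L)=g_3(\L)+n-1$ is proved on the Floer side, via the Bennequin-type equality $2\tau(\L)-2=-\chi_4(\L)$ for quasi-positive links together with $\chi_4=\chi_3$ for strongly quasi-positive ones (cf. \cite{Cavallo18-2}); only the backward implication ``supports $\xist$ $\Rightarrow$ strongly quasi-positive'' is contact-geometric, using Giroux's stabilization theorem plus Rudolph's result that plumbing positive Hopf bands preserves quasipositivity of surfaces. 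Second, the two pillars you lean on --- the equivalence $\tau(\L)=g_3(\L)+n-1 \iff c(\xi)\neq 0$ in the collapsed, multi-component setting, and the link-level Hedden theorem --- are precisely the content of \cite[Theorem 1.1]{Cavallo20}; citing them as known results, as you do for the ``link analogue of Hedden's theorem,'' makes the proposal a correct roadmap rather than a proof. You flag these as ``the hard part'' yourself, and that diagnosis is right, but none of the hard part is actually carried out, and the one place where you do propose a mechanism (the monodromy equivalence) is the step that would fail.
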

Finally, we call $\widehat{HFL}(\L)$ the bigraded object associated to $\widehat{CFL}(\L)$. For any $s\in \mathbb{Z}$, one defines: 
 \[\widehat{HFL}_*(\L,s)=H_*\left(\faktor{\widehat A(\L,s)}{\widehat A(\L,s-1)}\right)\] and this is an $\F$-vector space, which also depends on $\L$. Note that 
 \[\widehat{HFL}(\L, s)=\bigoplus_{\{\bm{s}\in \H(\L)\mid s_{1}+\cdots+s_{n}=s\}} \widehat{HFL}(\L, \bm{s})\:.\]
 
 We call $s_{\text{top}}$ the maximal Alexander grading $s$ such that $\widehat{HFL}_*(\L,s)\neq\{0\}$.
\begin{theorem}\emph{\cite{Ghiggini,Ni}}
 \label{thm:fibered}
 A link $\L$ is fibered if and only if $\dim_{\F}\widehat{HFL}_*(\L,s_{\text top})=1$.
\end{theorem}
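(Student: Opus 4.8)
The plan is to translate the statement into the language of sutured Floer homology, following the sutured-manifold approach of Ni (the knot case being \cite{Ghiggini,Ni}). The starting point is a Seifert surface $S$ for $\L$ of maximal Euler characteristic, i.e.\ one realizing $g_3(\L)$; cutting the link exterior $S^3\setminus N(\L)$ along $S$ produces a balanced sutured manifold $M_S$. The key input is Juh\'asz's surface decomposition theorem, which identifies the extremal Alexander grading of link Floer homology with the sutured Floer homology of this complement:
\[\widehat{HFL}_*(\L,s_{\text{top}})\;\cong\;SFH(M_S)\:.\]
Since a minimal-genus Seifert surface is taut, Gabai's theorem guarantees $SFH(M_S)\neq\{0\}$, which simultaneously pins down $s_{\text{top}}$ in terms of $g_3(\L)$ and reduces the theorem to the assertion that $SFH(M_S)$ has rank one if and only if $M_S$ is a product sutured manifold.

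First I would dispatch the easy direction. If $\L$ is fibered with fiber $S$, then the exterior fibers over $S^1$ and cutting along the fiber yields the product sutured manifold $M_S\cong S\times[0,1]$. A direct computation (Juh\'asz) shows $SFH(S\times[0,1])\cong\F$, so $\dim_\F\widehat{HFL}_*(\L,s_{\text{top}})=1$.

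The converse is the substantial direction. Assuming $\dim_\F\widehat{HFL}_*(\L,s_{\text{top}})=1$, i.e.\ $SFH(M_S)\cong\F$, I would invoke Juh\'asz's product-detection theorem: an irreducible, taut balanced sutured manifold whose sutured Floer homology has rank one is necessarily a product. Granting this, $M_S\cong S\times[0,1]$, which means the exterior of $\L$ fibers over $S^1$ with fiber $S$; hence $\L$ is fibered.

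The main obstacle is exactly this product-detection step. Its proof rests on Gabai's construction of a sutured manifold hierarchy together with the behavior of $SFH$ under surface decompositions: one must show that a well-chosen decomposing surface in a non-product piece produces strictly more than one generator, so that a rank-one $SFH$ forces every step of the hierarchy to be a product. Controlling the polytope of $SFH$ and arranging that the decomposing surfaces are \emph{nice} in Juh\'asz's sense, so that they induce the expected direct-summand map on $SFH$, is the technical heart of the argument; the link case requires in addition keeping track of all $n$ meridional sutures simultaneously.
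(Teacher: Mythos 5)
The paper does not actually prove this statement: Theorem \ref{thm:fibered} is quoted directly from the literature (Ghiggini for genus-one knots, Ni for the general case), so there is no internal proof to compare against. Your sketch is the now-standard sutured reformulation of that result, essentially Juh\'asz's proof (``Floer homology and surface decompositions''), and its outline is sound: the identification of the top Alexander grading with $SFH$ of the complement of a taut Seifert surface, nonvanishing for taut sutured manifolds, $SFH(S\times[0,1])\cong\F$ for the easy direction, and product detection for the hard one. This differs in flavor from the cited originals --- Ghiggini's argument is contact-geometric, and Ni's predates $SFH$ and runs through Gabai hierarchies and a knotification trick to reduce links to knots in $\#^{n-1}(S^1\times S^2)$ --- and what your route buys is exactly that the link case needs no knotification, since sutured manifolds see all $n$ boundary components at once. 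Two points of precision you should fix if this were written out: first, Juh\'asz's product-detection theorem assumes the taut balanced sutured manifold satisfies $H_2(M)=0$ (which does hold for Seifert surface complements in $S^3$), not ``irreducible'' as you state; second, the nonvanishing $SFH(M_S)\neq\{0\}$ is Juh\'asz's theorem (proved using Gabai's hierarchy existence), not Gabai's theorem itself, and pinning down $s_{\text{top}}=g_3(\L)+n-1$ in the \emph{collapsed} grading used in this paper requires the Thurston-norm detection statement for the multi-graded $\widehat{HFL}$ together with the observation that the fiber of a fibered link realizes the Seifert genus --- a point the paper itself is careful about right after the theorem.
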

We also recall that for an $n$-component fibered link $\L$ one has $s_{\text{top}}=g_3(\L)+n-1$. This is a classical result which follows from the fact that every Seifert surface for a fibered link is connected and if it realizes the minimal Seifert genus then it is isotopic to a fiber, see \cite{Kawauchi,Ni09}. A more detailed proof is in \cite{Cavallo20}.

\section{The \texorpdfstring{$H$}{H}-function and \texorpdfstring{$L$}{L}-space links}
\label{sec:the H-function}
\subsection{Properties of the \texorpdfstring{$H$}{H}-function}
By the large surgery theorem \cite[Theorem 12.1]{MO}, the homology of $A^{-}(\L, \bm{s})$ is isomorphic to the Heegaard Floer homology of a large surgery on the link $\L$, equipped with some Spin$^{c}$ structure which depends on $\bm s$, as an $\F[U]$-module. Thus the homology of $A^{-}(\L, \bm{s})$ is a direct sum  of one copy of $\F[U]$ and some $U$-torsion submodule. 

\begin{definition}\cite[Definition 3.9]{BG}
For an oriented link $\L\subset S^{3}$, we define the $H$-function $H_{\L}(\bm{s})$ by saying that  $-2H_{\L}(\bm{s})$ is the maximal homological degree of the free part of $H_{\ast}(A^{-}(\L, \bm{s}))$ where $\bm{s}\in \H(\L)$. 
\end{definition}

We list several properties of the $H$-function as follows.

\begin{lemma}\emph{\cite[Proposition 3.10]{BG}}
\label{growth control}
For an oriented link $\L\subset S^{3}$, the $H$-function $H_{\L}(\bm{s})$ takes non-negative values, and $H_{\L}(\bm{s}-\bm{e}_{i})=H_{\L}(\bm{s})$ or $H_{\L}(\bm{s}-\bm{e}_{i})=H_{\L}(\bm{s})+1$ where $\bm{s}\in \H(\L)$. 

\end{lemma}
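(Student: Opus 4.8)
The plan is to read off both assertions directly from the $\F[U]$-module structure of $H_*(A^-(\L,\bm{s}))$, which by hypothesis splits as one free summand $\F[U]$ (whose top generator sits in homological degree $-2H_{\L}(\bm{s})$, by the definition of the $H$-function) plus a $U$-torsion submodule. Throughout I will use three inputs from the excerpt: the actions of the $U_i$ are all homotopic on $CFL^-(\L)$, so they induce a single $U$-action in homology \cite{MO}; the total homology $HF^-(S^3)\cong\F[U]_0$ is torsion-free and concentrated in non-positive even degrees; and multiplication by $U_i$ sends the filtered piece $A^-(\L,\bm{s})$ into $A^-(\L,\bm{s}-\bm{e}_i)$, since $U_i$ lowers the $i$-th Alexander grading by one.

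For \emph{non-negativity}, fix $\bm{s}$ and choose $\bm{s}'\succeq\bm{s}$ so large that $A^-(\L,\bm{s}')=CFL^-(\L)$; write $\bm{s}'-\bm{s}=\sum_i k_i\bm{e}_i$ with $k_i\geq0$ and put $K=\sum_i k_i$. The operator $P=\prod_i U_i^{k_i}$ carries $CFL^-(\L)=A^-(\L,\bm{s}')$ into $A^-(\L,\bm{s})$, and the composite $A^-(\L,\bm{s})\hookrightarrow CFL^-(\L)\xrightarrow{P}A^-(\L,\bm{s})$ induces multiplication by $U^K$ on $H_*(A^-(\L,\bm{s}))$, which is injective on the free summand. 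Hence the inclusion-induced map $\iota_*\colon H_*(A^-(\L,\bm{s}))\to HF^-(S^3)=\F[U]_0$ is non-zero on the free generator $x$. Since $\F[U]_0$ is torsion-free, $\iota_*(x)$ is a non-zero multiple of $U^j$ times the degree-$0$ generator for some $j\geq0$; comparing homological degrees gives $-2H_{\L}(\bm{s})=-2j$, so $H_{\L}(\bm{s})=j\geq0$.

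For the \emph{step property} I compare the two adjacent pieces through the inclusion $\iota_*\colon H_*(A^-(\L,\bm{s}-\bm{e}_i))\to H_*(A^-(\L,\bm{s}))$ and the map $(U_i)_*\colon H_*(A^-(\L,\bm{s}))\to H_*(A^-(\L,\bm{s}-\bm{e}_i))$ induced by $U_i$. Both composites $\iota_*\circ(U_i)_*$ and $(U_i)_*\circ\iota_*$ equal multiplication by $U$ on the respective homology, and $U$ is injective on the free summands. A short torsion argument shows neither map can collapse a free generator into torsion: if $(U_i)_*(x)$ were torsion, then post-composing with $\iota_*$ would make $U\cdot x=\iota_*\bigl((U_i)_*(x)\bigr)$ torsion, contradicting that $x$ generates a free summand; a symmetric argument handles $\iota_*(y)$, where $y$ is the free generator in degree $-2H_{\L}(\bm{s}-\bm{e}_i)$. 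Writing the free parts of $\iota_*(y)$ and $(U_i)_*(x)$ as $U^m$ and $U^{m'}$ times the respective top generators, degree bookkeeping yields $m=H_{\L}(\bm{s}-\bm{e}_i)-H_{\L}(\bm{s})\geq0$ and $m'=1-m\geq0$; since $\iota_*\circ(U_i)_*=U$ forces $m+m'=1$, we conclude $m\in\{0,1\}$, which is exactly the claim.

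The main obstacle is precisely the non-triviality of these maps on the free parts, i.e. ruling out that an inclusion or a $U_i$-multiplication sends the free generator into the torsion submodule. This is what the identity ``composite $=U$'' (together with the torsion-freeness of $\F[U]_0$ and injectivity of $U$ on a free module) is designed to control; once non-triviality is secured, both statements reduce to matching the homological gradings of the top generators.
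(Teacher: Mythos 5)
Your proposal is correct. Note, however, that the paper does not actually prove this lemma: it is quoted verbatim from \cite[Proposition 3.10]{BG}, so there is no internal proof to compare against. Your argument is essentially a reconstruction of the standard one used in that reference (and, for knots, going back to Ozsv\'ath--Szab\'o and Rasmussen): the two composites $\iota\circ U_i$ and $U_i\circ\iota$ are multiplication by $U_i$, hence induce $U$ on homology; this prevents either map from killing the free generator, and then the grading bookkeeping pins down the exponents $m=H_{\L}(\bm{s}-\bm{e}_i)-H_{\L}(\bm{s})\geq0$ and $m'=1-m\geq0$, which is the claim. Both halves of your proof are sound, including the non-negativity step where $P_*\circ\iota_*=U^K$ forces $\iota_*(x)\neq0$ in $HF^-(S^3)\cong\F[U]_0$. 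The only input you use slightly beyond what you quote is that the $U_i$-actions agree (up to homotopy) already on the subcomplexes $A^-(\L,\bm{s})$, not merely on $CFL^-(\L)$; this is true (it is part of the Manolescu--Ozsv\'ath framework and is implicitly granted by the paper when it declares $H_*(A^-(\L,\bm{s}))$ to be an $\F[U]$-module with a single free summand via the large surgery theorem), but it would be worth stating explicitly, since your degree bookkeeping on $H_*(A^-(\L,\bm{s}-\bm{e}_i))$ and the identity ``composite $=U$'' both rely on it.
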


\begin{lemma}\emph{\cite[Proposition 3.12]{BG}}
\label{infinity}
For an oriented link $\L=L_{1}\cup \cdots \cup L_{n}\subset S^{3}$ and $\bm{s}=(s_{1}, \cdots, s_{n})\in \H(\L)$, one has
\[H_{\L}(s_{1},\cdots, s_{n-1}, \infty)=H_{\L\setminus L_{n}}(s_{1}-\lk(L_{1}, L_{n})/2, \cdots, s_{n-1}-\lk(L_{n-1}, L_{n})/2 )\:,\]
where $\lk(L_{i}, L_{n})$ denotes the linking number of $L_{i}$ and $L_{n}$ for $i=1, 2, \cdots, n-1$.
\end{lemma}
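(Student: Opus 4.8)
The plan is to prove the identity at the level of the filtered complexes: I would identify the ``$s_n=\infty$'' subcomplex of $CFL^-(\L)$ with a generalized Floer complex for the sublink $\L\setminus L_n$, and then track the induced shift of the arguments, which will be governed entirely by the linking numbers $\lk(L_i,L_n)$.

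First I would attach a meaning to the symbol $s_n=\infty$. Since the Alexander filtration is increasing and $A^-(\L,\bm{s})=CFL^-(\L)$ once $\bm{s}$ is large, the modules $A^-(\L,(s_1,\ldots,s_{n-1},m))$ form an increasing, eventually constant chain as $m\to\infty$, and I take $A^-(\L,(s_1,\ldots,s_{n-1},\infty))$ to be their union. Concretely this is the subcomplex of $CFL^-(\L)$ cut out by imposing the filtration bounds $s_1,\ldots,s_{n-1}$ on the first $n-1$ components and no bound in the $n$-th direction.

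The core step is to identify this subcomplex with a generalized Floer complex for $\L':=\L\setminus L_n$. The mechanism is the standard ``forgetting a component'' procedure in link Floer homology: deleting the basepoint pair associated to $L_n$ from a multi-pointed Heegaard diagram for $\L$ yields a diagram for $\L'$, and ignoring the $n$-th Alexander grading — which is exactly what $s_n=\infty$ does — turns $CFL^-(\L)$ into a complex filtered-chain-homotopy-equivalent to $CFL^-(\L')$. Under this equivalence the $i$-th Alexander grading computed with respect to $\L$ and the one computed with respect to $\L'$ differ by the constant $\lk(L_i,L_n)/2$. This is precisely the shift already visible at the level of lattices: from
\[
\lk(L_i,\L\setminus L_i)=\lk(L_i,L_n)+\lk(L_i,\L'\setminus L_i)
\]
one gets $\H_i(\L)=\H_i(\L')+\lk(L_i,L_n)/2$, so the filtration level $s_i$ for $\L$ corresponds to $s_i-\lk(L_i,L_n)/2$ for $\L'$, which produces exactly the arguments appearing on the right-hand side of the claimed equality. (As a sanity check, on the surgery side of the large surgery theorem sending the $n$-th Spin$^c$ component to infinity should correspond to not surging on $L_n$, i.e. to a large surgery on $\L'$ alone.)

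Finally I would read off the $H$-function. Because both $CFL^-(\L)$ and $CFL^-(\L')$ are normalized so that their total homology is $HF^-(S^3)\cong\F[U]_0$, the identification above respects the homological ($U$-power) grading, so the maximal homological degree of the free $\F[U]$-part of $H_*(A^-(\L,(s_1,\ldots,s_{n-1},\infty)))$ agrees with that of $H_*(A^-(\L',(s_1-\lk(L_1,L_n)/2,\ldots,s_{n-1}-\lk(L_{n-1},L_n)/2)))$; dividing by $-2$ gives the stated equality. The main obstacle is exactly this identification step: one must verify that forgetting the $n$-th filtration genuinely recovers the sublink complex with no residual homological grading shift — in particular that the variable $U_n$, which still acts on $A^-(\L,\bm{s})$, is absorbed correctly so that the free $\F[U]$-part and its degree are preserved — and that the Alexander shift is precisely $\lk(L_i,L_n)/2$ rather than some other multiple. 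Once this is pinned down, the remainder is bookkeeping.
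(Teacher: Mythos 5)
The paper does not prove this lemma itself---it quotes it from \cite[Proposition 3.12]{BG}---and your argument is essentially the proof given there: one identifies the $s_n=\infty$ subcomplex, via the reduction procedure of \cite{MO} (forgetting the $z$-basepoint of $L_n$), with a generalized Floer complex for $\L\setminus L_n$, and the shift by $\lk(L_i,L_n)/2$ is precisely the discrepancy between the Alexander gradings computed with respect to $\L$ and with respect to the sublink, exactly as you describe. The one imprecision is that you cannot delete the whole basepoint pair of $L_n$: the $w_n$-basepoint must survive as a free basepoint of the reduced diagram, and the resulting extra variable $U_n$ acts chain homotopically to the other $U_i$'s, so the free $\F[U]$-part of the homology and its top degree are unaffected---which is exactly the verification point you flagged, and it is supplied by the Manolescu--Ozsv\'ath machinery rather than needing a new idea.
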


\begin{remark}
We use the convention that $H_{\L}(\infty, \cdots, \infty)=0$.
\end{remark}

In general, it is very hard to compute the $H$-function for links in $S^{3}$. However, for the family of $L$-space links, its $H$-function can be computed from its Alexander polynomials \cite{BG}. We first review the definition of $L$-space links, introduced by \OS and Szab\'o in \cite{OS05}.
\begin{definition}
A 3-manifold $Y$ is an $L$-space if it is a rational homology sphere and its Heegaard Floer homology has minimal possible rank: for any Spin$^{c}$-structure $\s$, $\widehat{HF}(Y, \s)\cong\F$ and $HF^{-}(Y, \s)$ is a free $\F[U]$-module of rank 1. 
\end{definition}

\begin{definition}\cite{GN, Liu}
An  $n$-component link $\L\subset S^{3}$ is an $L$-space link if there exists $\bm{p}\in \Z^{n}$ with $0<p_i$ for every $i=1,...,n$ such that the surgery manifold $S^{3}_{\bm{q}}(\L)$ is an $L$-space for any $\bm{q}\succeq \bm{p}$.
\end{definition}

$L$-space links have the following properties.

\begin{theorem}\emph{\cite{Liu}}
\label{L-space link pro}
We have that
\begin{enumerate}[a)]
    \item every sublink of an $L$-space link is an $L$-space link; 
    \item a link is an $L$-space link if and only if for all $\bm{s}$ one has $H_{\ast}(A^{-}(\L, \bm{s}))\cong\F[U]$.
\end{enumerate}
\end{theorem}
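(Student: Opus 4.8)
The plan is to prove part (b) first, using the large surgery theorem \cite[Theorem 12.1]{MO} as the main engine, and then to deduce part (a) from (b) by an induction on the number of components. Recall that the large surgery theorem provides, for each $\bm s\in\H(\L)$, a threshold $\bm N(\bm s)\in\Z^n$ such that whenever $\bm q\succeq\bm N(\bm s)$ there is an $\F[U]$-module isomorphism $H_*(A^-(\L,\bm s))\cong HF^-(S^3_{\bm q}(\L),\s_{\bm s})$, where $\s_{\bm s}$ is the Spin$^c$ structure determined by $\bm s$, and every Spin$^c$ structure of $S^3_{\bm q}(\L)$ is realized this way once $\bm q$ is large. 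For the ``only if'' direction, assume $\L$ is an $L$-space link, witnessed by some $\bm p$. Fixing an arbitrary $\bm s$, choose $\bm q$ with $\bm q\succeq\bm p$ and $\bm q\succeq\bm N(\bm s)$ (take the coordinatewise maximum). Then $S^3_{\bm q}(\L)$ is an $L$-space, so $HF^-(S^3_{\bm q}(\L),\s_{\bm s})\cong\F[U]$, and the isomorphism above forces $H_*(A^-(\L,\bm s))\cong\F[U]$; since $\bm s$ was arbitrary, this is the claim. For the converse, suppose $H_*(A^-(\L,\bm s))\cong\F[U]$ for all $\bm s$. For $\bm q$ large enough that the theorem covers a full set of representatives of the (finitely many) Spin$^c$ structures, each $HF^-(S^3_{\bm q}(\L),\s)$ is free of rank one; the long exact sequence relating the $U$-action on $HF^-$ with $\widehat{HF}$ then upgrades this to $\widehat{HF}(S^3_{\bm q}(\L),\s)\cong\F$, and since large surgeries have non-degenerate linking matrix the manifold is a rational homology sphere. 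Hence all sufficiently large surgeries are $L$-spaces, so $\L$ is an $L$-space link.

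For part (a), it suffices by induction to show that deleting a single component, say $L_n$, preserves the $L$-space link property, and for this I would use the characterization from (b). The key point is that for fixed $s_1,\dots,s_{n-1}$ and $s_n$ sufficiently large the subcomplex $A^-(\L,\bm s)$ stabilizes: since $CFL^-(\L)$ is finitely generated over $\F[U_1,\dots,U_n]$ and $U_n$ lowers the $n$-th Alexander grading, that grading is bounded above, so once $s_n$ exceeds this bound the constraint in the $n$-th direction becomes vacuous and $A^-(\L,\bm s)=A^-(\L,(s_1,\dots,s_{n-1},\infty))$. The latter complex computes the large-surgery homology of the sublink $\L\setminus L_n$ — this is the module-level refinement of Lemma \ref{infinity}, reflecting that $\infty$-surgery on $L_n$ returns the ambient manifold — so that $H_*(A^-(\L\setminus L_n,\bm s'))\cong H_*(A^-(\L,\bm s))$ for $s_n\gg 0$, with $\bm s'=(s_1-\lk(L_1,L_n)/2,\dots,s_{n-1}-\lk(L_{n-1},L_n)/2)$. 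If $\L$ is an $L$-space link, then by (b) the right-hand side is $\F[U]$ for every $\bm s$, hence the left-hand side is $\F[U]$ for every $\bm s'$, and applying (b) to $\L\setminus L_n$ shows it is an $L$-space link. Iterating over components gives the result for every sublink.

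The main obstacle is the bookkeeping inside the large surgery theorem: one must match the lattice points $\bm s$ with Spin$^c$ structures $\s_{\bm s}$ and verify that the ``sufficiently large'' thresholds can be chosen compatibly — simultaneously exceeding $\bm p$ in the only-if direction, and realizing all Spin$^c$ structures of a single surgered manifold in the if direction. The second delicate point, needed for (a), is the module-level (not merely $H$-function-level) identification of $A^-(\L,(s_1,\dots,s_{n-1},\infty))$ with the sublink complex, together with the stabilization of the Alexander filtration in the $n$-th direction that justifies passing to $s_n=\infty$; these are the steps I would write out most carefully.
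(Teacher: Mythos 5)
This theorem is not proved in the paper at all---it is quoted from \cite{Liu}---and the argument given there (originally due to Y.~Liu) is essentially the one you propose: part (b) comes from the Manolescu--Ozsv\'ath large surgery theorem by matching lattice points in $\H(\L)$ with the Spin$^c$ structures of a single sufficiently large surgery (large in both the sense of the witness $\bm p$ and of the surgery-theorem threshold), and part (a) is deduced from (b) via the identification of $A^{-}(\L,(s_1,\dots,s_{n-1},\infty))$ with the generalized Floer complex of $\L\setminus L_n$ shifted by the linking numbers. Your proposal is correct, and you rightly single out the one genuinely delicate point: for part (a) you need this identification as a quasi-isomorphism of $\F[U]$-complexes (the statement at the level of Heegaard diagrams in \cite{MO}), not merely the $H$-function equality of Lemma \ref{infinity}, since the $H$-function only records the top degree of the free part of $H_{\ast}(A^{-}(\L,\bm{s}))$ and would not by itself rule out extra $U$-torsion in the sublink's homology.
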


For $L$-space links, the $H$-function can be computed from the multi-variable Alexander polynomial.
Indeed, by (b) and the inclusion-exclusion formula, one can write
\begin{equation}
\label{computation of h-function 1}
\chi(HFL^{-}(\L, \bm{s}))=\sum_{B\subset \lbrace 1, \cdots, n \rbrace}(-1)^{|B|-1}H_{\L}(\bm{s}-\bm{e}_{B}),
\end{equation}
as in \cite[Equation (3.14)]{BG}. The Euler characteristic $\chi(HFL^{-}(\L, \bm{s}))$ was computed in \cite{OS:linkpoly},
\begin{equation}
\label{computation 3}
\tilde{\Delta}(t_{1}, \cdots, t_{n})=\sum_{\bm{s}\in \H(\L)}\chi(HFL^{-}(\L, \bm{s}))t_{1}^{s_{1}}\cdots t_{n}^{s_{n}}
\end{equation}
where $\bm{s}=(s_{1}, \cdots, s_{n})$, and
\begin{equation}
\label{mva}
\widetilde{\Delta}_{\L}(t_{1}, \cdots, t_{n}): = \left\{
        \begin{array}{ll}
           (t_{1}\cdots t_{n})^{1/2} \Delta_{\L}(t_{1}, \cdots, t_{n}) & \quad \textup{if } n >1, \\
            \Delta_{\L}(t)/(1-t^{-1}) & \quad  \textup{if } n=1. 
        \end{array}
    \right. 
\end{equation}

\begin{remark}
Here we expand the rational function as power series in $t^{-1}$, assuming that the exponents are bounded in positive direction. The Alexander polynomials are normalized so that they are symmetric about the origin. This still leaves out the sign ambiguity which can be resolved for $L$-space links by requiring that $H(s)\ge 0$ for all $s$.
\end{remark}

One can regard Equation \eqref{computation of h-function 1} as a system of linear equations for $H(s)$ and solve it explicitly 
using the values of the $H$-function for sublinks as the boundary conditions. We refer to \cite{BG,GN} for general formulas. The explicit formula for links with one and two components can be found in \cite{GLM}. 

\begin{example}\cite[Lemma 2.11]{Liu2}
\label{knotgenus}
For an $L$-space knot $K$ one has $H_{K}(s)=0$ if and only if $s\geq g_{3}(K)$. 
\end{example}

\begin{example}
\label{wh H}
The (symmetric) Alexander polynomial of the Whitehead link equals 
\[
\Delta(t_1,t_2)=-(t_1^{1/2}-t_1^{-1/2})(t_2^{1/2}-t_2^{-1/2}),
\]
so
\[
\widetilde{\Delta}(t_1,t_2)=(t_1t_2)^{1/2}\Delta(t_1,t_2)=-(t_1-1)(t_2-1).
\]
The values of the $H$-function are in Figure \ref{Whitehead}: note that the link is type (A).
\begin{figure}
\begin{center}
\begin{tikzpicture}
\draw (1,0)--(1,5);
\draw (2,0)--(2,5);
\draw (3,0)--(3,5);
\draw (4,0)--(4,5);
\draw (0,1)--(5,1);
\draw (0,2)--(5,2);
\draw (0,3)--(5,3);
\draw (0,4)--(5,4);
\draw (0.5,4.5) node {2};
\draw (1.5,4.5) node {1};
\draw (2.5,4.5) node {0};
\draw (3.5,4.5) node {0};
\draw (4.5,4.5) node {0};
\draw (0.5,3.5) node {2};
\draw (1.5,3.5) node {1};
\draw (2.5,3.5) node {0};
\draw (3.5,3.5) node {0};
\draw (4.5,3.5) node {0};
\draw (0.5,2.5) node {2};
\draw (1.5,2.5) node {1};
\draw (2.5,2.5) node {1};
\draw (3.5,2.5) node {0};
\draw (4.5,2.5) node {0};
\draw (0.5,1.5) node {3};
\draw (1.5,1.5) node {2};
\draw (2.5,1.5) node {1};
\draw (3.5,1.5) node {1};
\draw (4.5,1.5) node {1};
\draw (0.5,0.5) node {4};
\draw (1.5,0.5) node {3};
\draw (2.5,0.5) node {2};
\draw (3.5,0.5) node {2};
\draw (4.5,0.5) node {2};
\draw [->,dotted] (0,2.5)--(5,2.5);
\draw [->,dotted] (2.5,0)--(2.5,5);
\draw (5,2.7) node {$s_1$};
\draw (2.3,5) node {$s_2$};
\end{tikzpicture}
\end{center}
\caption{The $H$-function of the Whitehead link.}
\label{Whitehead}
\end{figure}
\end{example}

\subsection{Spectral sequences}
\label{sec:computation}
For an  $L$-space link $\L$, there exist spectral sequences converging to $HFL^{-}(\L)$ and $\widehat{HFL}(\L)$ respectively \cite{Hom, GN}. 

\begin{proposition}\emph{\cite[Theorem 1.4]{GN}}
\label{spectral sequence 1}
For an oriented $L$-space link $\L\subset  S^{3}$ with n components and $\bm{s}\in \H(\L)$, there exists a spectral sequence with $E_{\infty}=HFL^{-}(\L, \bm{s})$ and 
$$E_{1}=\bigoplus_{B\subset \lbrace 1, \cdots, n \rbrace} H_{\ast}(A^{-}(\L, \bm{s}-\bm{e}_{B})), $$
where the differential in $E_{1}$ is induced by inclusions.

\end{proposition}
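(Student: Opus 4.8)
The plan is to treat this as a purely homological-algebra statement built on the lattice structure of the Alexander filtration, with the $L$-space hypothesis entering only to make the resulting $E_1$-page explicit. The first step is to record two properties of the filtered subcomplexes $A^{-}(\L,\bm{s})\subset CFL^{-}(\L)$. Since $A^{-}(\L,\bm{v})$ is spanned by the generators whose Alexander multi-grading is $\preceq\bm{v}$, the sum over the points strictly below $\bm{s}$ is realized by the maximal ones, namely $\sum_{\bm{u}\prec\bm{s}}A^{-}(\L,\bm{u})=\sum_{i=1}^{n}A^{-}(\L,\bm{s}-\bm{e}_{i})$, so that $HFL^{-}(\L,\bm{s})$ is the homology of $A^{-}(\L,\bm{s})$ modulo this sum. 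Moreover, intersecting corresponds to taking componentwise minima of the gradings, whence $\bigcap_{i\in B}A^{-}(\L,\bm{s}-\bm{e}_{i})=A^{-}(\L,\bm{s}-\bm{e}_{B})$ for every $B\subseteq\{1,\dots,n\}$. This distributivity is the geometric heart of the argument.

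Next I would assemble the double complex $X^{p,\bullet}$ with columns $X^{p}=\bigoplus_{|B|=p}A^{-}(\L,\bm{s}-\bm{e}_{B})$ for $0\leq p\leq n$, so that $X^{0}=A^{-}(\L,\bm{s})$ and $X^{n}=A^{-}(\L,\bm{s}-\bm{1})$. The vertical differential is the internal Floer differential, and the horizontal differential $d^{h}\colon X^{p}\to X^{p-1}$ is the \v{C}ech-type alternating sum of the inclusions $A^{-}(\L,\bm{s}-\bm{e}_{B})\hookrightarrow A^{-}(\L,\bm{s}-\bm{e}_{B\setminus\{i\}})$ over $i\in B$. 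These inclusions are chain maps, so $d^{h}$ commutes with the internal differential up to sign and satisfies $d^{h}\circ d^{h}=0$ by the usual Koszul sign bookkeeping, and the total complex $\mathrm{Tot}(X)$ is bounded, which guarantees convergence of both of its spectral sequences. The key computation is the horizontal homology: for a fixed generator $x\in A^{-}(\L,\bm{s})$ let $T_{x}=\{i:A_{i}(x)<s_{i}\}$. Then $x$ lies in $A^{-}(\L,\bm{s}-\bm{e}_{B})$ exactly when $B\subseteq T_{x}$, so the span of $x$ across all columns is the simplicial chain complex of the full simplex on the vertex set $T_{x}$. This is acyclic when $T_{x}\neq\varnothing$, and contributes a single copy of $\F\cdot x$ in column $p=0$ when $T_{x}=\varnothing$. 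Since $x$ survives in $A^{-}(\L,\bm{s})/\sum_{i}A^{-}(\L,\bm{s}-\bm{e}_{i})$ precisely when $A_{i}(x)=s_{i}$ for all $i$, i.e.\ when $T_{x}=\varnothing$, the horizontal homology is concentrated in column $0$ and equals the quotient complex defining $HFL^{-}$. Taking the remaining vertical homology identifies the total homology of $\mathrm{Tot}(X)$ with $HFL^{-}(\L,\bm{s})$.

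Finally, I would invoke the other spectral sequence of the same double complex, obtained by filtering by columns and taking vertical homology first. Its first page is $E_{1}=\bigoplus_{p}\bigoplus_{|B|=p}H_{*}(A^{-}(\L,\bm{s}-\bm{e}_{B}))=\bigoplus_{B\subseteq\{1,\dots,n\}}H_{*}(A^{-}(\L,\bm{s}-\bm{e}_{B}))$, with $d_{1}$ induced by the inclusion maps, exactly as in the statement; by boundedness it converges to the common total homology, so $E_{\infty}=HFL^{-}(\L,\bm{s})$. The $L$-space hypothesis is not needed to build the spectral sequence, but by Theorem \ref{L-space link pro}(b) each summand $H_{*}(A^{-}(\L,\bm{s}-\bm{e}_{B}))\cong\F[U]$, which is what renders the $E_{1}$-page computable in practice.

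I expect the main obstacle to be the acyclicity step of the previous paragraph: one must verify carefully, via the simplex identification and the distributivity $A^{-}(\L,\bm{s}-\bm{e}_{B})\cap A^{-}(\L,\bm{s}-\bm{e}_{B'})=A^{-}(\L,\bm{s}-\bm{e}_{B\cup B'})$, that the horizontal \v{C}ech complex has no homology outside column $0$, together with fixing the signs in $d^{h}$ so that the double complex is genuinely a complex and the $d_{1}$ differential is precisely the inclusion-induced map. The remainder is the standard two-spectral-sequences-of-a-bounded-bicomplex machinery.
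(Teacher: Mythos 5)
The paper offers no proof of this proposition: it is quoted verbatim from \cite[Theorem 1.4]{GN}, so there is no internal argument to compare against. Your proof is correct and is essentially the argument of the cited source: the cube of inclusions among the subcomplexes $A^{-}(\L,\bm{s}-\bm{e}_{B})$, filtered by columns, is exactly the iterated-cone machinery that the paper itself quotes in the $n=2$ case as \cite[Lemma 2.8]{GN}. The two points you flag as delicate are the right ones, and both go through: the distributivity $\bigcap_{i\in B}A^{-}(\L,\bm{s}-\bm{e}_{i})=A^{-}(\L,\bm{s}-\bm{e}_{B})$ holds because each $A^{-}(\L,\bm{v})$ is the span of a fixed $\F$-basis ($U$-monomial multiples of intersection points) filtered coordinatewise, so your generator-by-generator simplex decomposition of the horizontal complex is legitimate; and the sign bookkeeping is vacuous since the coefficients have characteristic two. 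Convergence is indeed automatic from the finiteness of the column filtration. You are also right that the $L$-space hypothesis plays no role in constructing the spectral sequence and enters only through Theorem \ref{L-space link pro}, which identifies each $E_{1}$ summand with $\F[U]$ and makes the page computable via the $H$-function.
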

\begin{remark}
\label{differential}
Precisely, the differential $\partial_{1}$ in the $E_{1}$-page is 
$$\partial_{1}(z(\bm{s}-\bm{e}_{B}))=\sum_{i\in B}U^{H_{\L}(\bm{s}-\bm{e}_{B})-H_{\L}(\bm{s}-\bm{e}_{B}+\bm{e}_{i})}z(\bm{s}-\bm{e}_{B}+\bm{e}_{i}),$$
where $z(\bm{s}-\bm{e}_{B})$ denotes the unique generator in $H_{\ast}(A^{-}(\L, \bm{s}-\bm{e}_{B}))$ with the homological grading $-2H_{\L}(\bm{s}-\bm{e}_{B})$. 

\end{remark}

\begin{proposition}\emph{\cite[Proposition 3.8]{Hom}}
\label{spectral sequence 2}
For an $L$-space link $\L\subset S^{3}$ with n components and $\bm{s}\in \H(\L)$, there exists a spectral sequence whose $E_{\infty}$ page is $\hat{E}_{\infty}=\widehat{HFL}(\L, \bm{s})$ and the  $E_{1}$ page is
$$\hat{E}_{1}=\bigoplus_{B\subset \lbrace 1, \cdots, n \rbrace}  HFL^{-}(\L, \bm{s}+\bm{e}_{B}).$$

\end{proposition}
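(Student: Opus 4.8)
The plan is to realize the passage from the minus flavor to the hat flavor as an $n$-fold iterated mapping cone and to read the spectral sequence off the resulting hypercube of complexes. The starting observation is that $\widehat{CFL}(\L)$ is obtained from $CFL^{-}(\L)$ by setting $U_{1}=\cdots=U_{n}=0$, that is, $\widehat{CFL}(\L)=CFL^{-}(\L)/(U_{1},\dots,U_{n})$. Since at the chain level $CFL^{-}(\L)$ is a free $\F[U_{1},\dots,U_{n}]$-module generated by the intersection points, the sequence $U_{1},\dots,U_{n}$ acts injectively; hence for each $i$ the quotient by $U_{i}$ is chain homotopy equivalent to the mapping cone of $U_{i}$, the kernel term being zero. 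Iterating over $i=1,\dots,n$—at each stage the remaining variables still act freely on the quotient—I would identify $\widehat{CFL}(\L)$ with the total complex of the $n$-dimensional hypercube whose vertex indexed by $B\subseteq\{1,\dots,n\}$ is a copy of $CFL^{-}(\L)$ and whose edge from $B\cup\{i\}$ to $B$ is multiplication by $U_{i}$.

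The second step is to carry the Alexander multi-filtration through this identification. Because $U_{i}$ drops the $i$-th Alexander grading by one, a grading shift is needed to make each edge filtered of degree zero; with the correct shift the copy of $CFL^{-}$ at vertex $B$ contributes, at filtration level $\bm{s}$, precisely the subcomplex $A^{-}(\L,\bm{s}+\bm{e}_{B})$. Passing to the associated graded of the Alexander filtration at level $\bm{s}$, which is legitimate since every map in the hypercube is filtered, therefore produces a double complex whose vertex $B$ is the associated graded of $CFL^{-}$ at level $\bm{s}+\bm{e}_{B}$, whose edges are the induced $U_{i}$-maps, and whose total homology is $\widehat{HFL}(\L,\bm{s})$; here one checks that this associated graded agrees with the quotient $A^{-}(\bm{s})/[\sum_{i}A^{-}(\bm{s}-\bm{e}_{i})+\sum_{i}U_{i}A^{-}(\bm{s}+\bm{e}_{i})]$ used to define $\widehat{HFL}(\L,\bm{s})$.

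Finally I would filter this double complex by the cube degree $|B|$. This filtration is finite, so the associated spectral sequence converges; its $E_{0}$ differential is the internal differential of each vertex, so that
\[
\hat{E}_{1}=\bigoplus_{B\subseteq\{1,\dots,n\}}HFL^{-}(\L,\bm{s}+\bm{e}_{B}),
\]
with $\hat{d}_{1}$ induced by the edge maps $U_{i}$, and the sequence abuts to $\widehat{HFL}(\L,\bm{s})$, as claimed. The construction is in fact valid for any link; the $L$-space hypothesis enters only afterwards, to make the $E_{1}$ page explicit, since then each $H_{*}(A^{-}(\L,\bm{s}+\bm{e}_{B}))\cong\F[U]$ and the minus homologies at the vertices are controlled by the $H$-function.

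The step I expect to be the main obstacle is the second one: verifying that the iterated mapping cone is genuinely compatible with the Alexander filtration, so that taking associated graded at level $\bm{s}$ returns exactly the shifted minus homologies $HFL^{-}(\L,\bm{s}+\bm{e}_{B})$ at the vertices and recovers $\widehat{HFL}(\L,\bm{s})$ as the total homology. Concretely, one must pin down the grading shifts that turn each $U_{i}$ into a filtered degree-zero map, confirm that $U_{i}$ is injective on the relevant associated graded pieces (so that the cones really compute the hat quotient rather than an extension involving kernels), and match the resulting complex with the definition of $\widehat{HFL}(\L,\bm{s})$ via inclusion–exclusion over the subsets $B$. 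Once this identification is in place, the remainder is the standard spectral sequence of a filtered complex.
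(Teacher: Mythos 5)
The paper contains no proof of this proposition---it is quoted directly from \cite[Proposition 3.8]{Hom}---so the only possible comparison is with the argument in that reference, and your proposal correctly reconstructs what is essentially that argument: present $\widehat{CFL}(\L)$ as the Koszul-type iterated mapping cone (hypercube) of the commuting $U_i$-actions on $CFL^-(\L)$, pass to the Alexander associated graded at level $\bm{s}$, and take the spectral sequence of the finite cube-degree filtration. The technical checks you single out all go through because $CFL^-(\L)$ is free over $\F[U_1,\dots,U_n]$ on the intersection points, so the subspaces $A^-(\L,\bm{s})$, $U_iA^-(\L,\bm{s}+\bm{e}_i)$ and all their sums are spanned by monomials, the induced $U_i$-maps on associated graded pieces are injective, and the total complex at level $\bm{s}$ agrees with the quotient defining $\widehat{HFL}(\L,\bm{s})$; your closing remark is also accurate, since the spectral sequence exists for every link and the $L$-space hypothesis enters only in making the $E_1$-page explicit through the $H$-function, as in Proposition \ref{spectral sequence 1}.
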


In the rest of this section, we review the explicit computation of $HFL^{-}(\L, \bm{s})$ for 2-component $L$-space links $\L$. Indeed, in this case the spectral sequence in Proposition \ref{spectral sequence 1} comes from the following \emph{iterated cone} complex.
\begin{lemma}\emph{\cite[Lemma 2.8]{GN}}
For any  $(s_{1}, s_{2})\in \H(\L)$, the chain complex $CFL^{-}(s_{1}, s_{2})$ of the  $L$-space link $L=L_{1}\cup L_{2}$ is quasi-isomorphic to the iterated cone complex:
\[ \left[  
\begin{tikzcd}
A^{-}(s_{1}-1, s_{2}) \arrow{r}{i_{1}}  & A^{-}(s_{1}, s_{2})  \\%
A^{-}(s_{1}-1, s_{2}-1) \arrow{r}{i_{1}} \arrow[swap]{u}{i_{2}} & A^{-}(s_{1}, s_{2}-1) \arrow[swap]{u}{i_{2}}
 \end{tikzcd}
\right]   \] 
where $i_{1}$ and $i_{2}$ are  inclusion maps in \emph{\cite[Lemma 2.4]{GN}}. 
\end{lemma}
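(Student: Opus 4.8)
The plan is to recognize the asserted quasi-isomorphism as a purely homological-algebraic consequence of the fact that the four modules $A^-(s_1,s_2)$, $A^-(s_1-1,s_2)$, $A^-(s_1,s_2-1)$, $A^-(s_1-1,s_2-1)$ are all subcomplexes of the single complex $CF^-(S^3)$, nested according to the Alexander filtration, with $i_1,i_2$ the filtration inclusions of \cite[Lemma 2.4]{GN}. First I would record what the target actually is: by the definition of the associated graded, for a $2$-component link the complex $CFL^-(s_1,s_2)$ whose homology is $HFL^-(\L,(s_1,s_2))$ equals the quotient $A^-(s_1,s_2)/\bigl[A^-(s_1-1,s_2)+A^-(s_1,s_2-1)\bigr]$, because $(s_1-1,s_2)$ and $(s_1,s_2-1)$ are the two maximal lattice points strictly below $(s_1,s_2)$ and the filtration $\bm u\mapsto A^-(\bm u)$ is increasing. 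So the goal is to show that the total complex of the square is quasi-isomorphic to this quotient.

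Second, I would compute the iterated cone one direction at a time. For an inclusion of chain complexes $B\hookrightarrow C$, i.e.\ a short exact sequence $0\to B\to C\to C/B\to 0$, the mapping cone is quasi-isomorphic to $C/B$ via the natural projection. Applying this to the two horizontal maps $i_1$ shows that the cone of the top row is quasi-isomorphic to $\bar Q_{\mathrm{top}}:=A^-(s_1,s_2)/A^-(s_1-1,s_2)$ and the cone of the bottom row to $\bar Q_{\mathrm{bot}}:=A^-(s_1,s_2-1)/A^-(s_1-1,s_2-1)$. Since these projections are natural and commute with the vertical maps $i_2$, the total complex of the square — which is the cone of the induced vertical map between the two horizontal cones — is quasi-isomorphic to $\mathrm{Cone}\bigl(\bar i_2\colon\bar Q_{\mathrm{bot}}\to\bar Q_{\mathrm{top}}\bigr)$.

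Third, I would show that $\bar i_2$ is injective. A chain $x\in A^-(s_1,s_2-1)$ whose image lies in $A^-(s_1-1,s_2)$ must lie in the intersection $A^-(s_1,s_2-1)\cap A^-(s_1-1,s_2)$, which by the ``minimum'' behaviour of the Alexander filtration equals $A^-(s_1-1,s_2-1)$; hence its class in $\bar Q_{\mathrm{bot}}$ is zero. Injectivity of $\bar i_2$ makes its cone quasi-isomorphic to the cokernel, and by the third isomorphism theorem
\[
\frac{\bar Q_{\mathrm{top}}}{\mathrm{im}\,\bar i_2}=\frac{A^-(s_1,s_2)}{A^-(s_1-1,s_2)+A^-(s_1,s_2-1)}=CFL^-(s_1,s_2),
\]
which is exactly the claim.

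The genuinely substantive inputs, and the places I expect to have to be careful, are twofold. First, the intersection identity $A^-(s_1,s_2-1)\cap A^-(s_1-1,s_2)=A^-(s_1-1,s_2-1)$ must be verified at the chain level directly from the description of the Alexander filtration on $CF^-(S^3)$; it is what makes the second cone collapse to an honest quotient rather than carrying extra homology. Second, I need the naturality statement that a map of squares which is a column-wise quasi-isomorphism induces a quasi-isomorphism of total complexes, so that one may legitimately replace each horizontal cone by its quotient before taking the vertical cone. Notably, neither step uses the $L$-space hypothesis; that hypothesis only enters afterwards, when one wishes to pass to homology and replace each $A^-(\L,\bm s)$ by $\F[U]$ in order to read the iterated cone as a concrete bigraded complex.
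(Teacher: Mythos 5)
Your argument is correct, and there is nothing in the paper to compare it against: this lemma is quoted verbatim from \cite[Lemma 2.8]{GN} and the paper offers no proof of it, so the only meaningful check is whether your reconstruction is sound --- it is, and it is essentially the standard (and, as far as the cited source goes, the intended) argument. The two points you flag as substantive are exactly the right ones. The intersection identity $A^-(s_1,s_2-1)\cap A^-(s_1-1,s_2)=A^-(s_1-1,s_2-1)$ holds at the chain level because, in the convention of this paper, $A^-(\bm{s})$ is the span of the homogeneous generators $U_1^{k_1}\cdots U_n^{k_n}\mathbf{x}$ whose Alexander multi-filtration level is $\preceq\bm{s}$, and membership is checked generator by generator, so the intersection of two filtration subcomplexes is the subcomplex of the coordinatewise minimum; this is the Mayer--Vietoris-type input that makes the second cone collapse onto the quotient $A^-(s_1,s_2)/\bigl[A^-(s_1-1,s_2)+A^-(s_1,s_2-1)\bigr]$, which (as you note) is precisely $CFL^-(s_1,s_2)$ since $(s_1-1,s_2)$ and $(s_1,s_2-1)$ dominate all $\bm{u}\prec\bm{s}$. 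The naturality step (a column-wise quasi-isomorphism of squares induces a quasi-isomorphism of total complexes) is standard via the long exact sequences of the cones, and over $\F=\F_2$ there are no sign issues in forming the iterated cone. Your closing observation is also worth keeping: the $L$-space hypothesis plays no role in this lemma; it enters only afterwards, when Theorem \ref{L-space link pro} is used to replace each $H_*(A^-(\L,\bm{s}))$ by $\F[U]$ and read off the $E_1$-page as in Figure \ref{mappingcone}.
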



The spectral sequence collapses at its $E_{2}$-page \cite[Theorem 2.9]{GN}. Its $E_1$ page is in Figure \ref{mappingcone}, with the differential $d_1$ induced from inclusion $i_1, i_2$. Note that $d_{2}$ changes the homological grading by an odd integer, but $\F[U]$ has even grading, which implies that $d_2=0$. Hence, to compute $HFL^{-}(\L, \bm{s})$, we just need to consider $d_1$. 
\begin{remark}
\label{remark:total}
The Maslov grading is the sum of the homological grading and  the cube grading.
\end{remark}


 
 \begin{figure}[H]
 \centering
\[ \begin{tikzcd}
\mathbb{F}[U][-2H_{\L}(s_{1}-1, s_{2})][b] \arrow{r}{i_{1}}  & \mathbb{F}[U][-2H_{\L}(s_{1}, s_{2})][a] \\%
\mathbb{F}[U][-2H_{\L}(s_{1}-1, s_{2}-1)][c] \arrow{r}{i_{1}} \arrow[swap]{u}{i_{2}} & \mathbb{F}[U][-2h_{\L}(s_{1}, s_{2}-1)][d] \arrow[swap]{u}{i_{2}}
\end{tikzcd}
\]
\caption{$E_1$-page.}
\label{mappingcone}
\end{figure}
Let  $a, b, c, d$ denote the generators in $\mathbb{F}[U][-2H_{\L}(s_{1}, s_{2})],\mathbb{F}[U][-2H_{\L}(s_{1}-1, s_{2})],  \mathbb{F}[U][-2H_{\L}$ $(s_{1}-1, s_{2}-1)]$, and $\mathbb{F}[U][-2H_{\L}(s_{1}, s_{2}-1)]$, respectively. Let $h=H(s_{1}, s_{2})$. By Lemma \ref{growth control}, there are 6 cases for the $H$-function corresponding to the mapping cone. 

Based on the $H$-function in Figure \ref{figure 1}, we compute the corresponding $HFL^{-}(\L, (s_{1}, s_{2}))$ in each case. Note that we use the convention that the cube grading of $H_{\ast}(A^{-}(s_1, s_2))$ is $0$. For details, see \cite{Liu3}.

\textbf{Case 1:} $i(b)=a, i(c)=b-d, i(d)=a$ and $i(a)=0$, so $HFL^{-}(s_{1}, s_{2})=0$. 

\textbf{Case 2:} $i(b)=a, i(c)=Ub-d, i(d)=Ua$ and $i(a)=0$, so $HFL^{-}(s_{1}, s_{2})=0$. 

\textbf{Case 3:} $i(b)=Ua, i(c)=b-Ud, i(d)=a$ and $i(a)=0$, so $HFL^{-}(s_{1}, s_{2})=0$.

\textbf{Case 4:} $i(b)=a, i(c)=Ub-Ud, i(d)=a$ and $i(a)=0$, so $HFL^{-}(s_{1}, s_{2})=\langle b-d \rangle$. Both $b$ and $d$ have homological grading $-2h$ and cube grading $1$. The Maslov grading of $b-d$ is $-2h+1$. Thus  $HFL^{-}(s_{1}, s_{2})=\F[-2h+1]$. 

\textbf{Case 5:} $i(b)=Ua, i(c)=b-d, i(d)=Ua$ and $i(a)=0$, so $HFL^{-}=\langle a \rangle$ with Maslov grading $-2h$. Thus $HFL^{-}(s_{1}, s_{2})=\F[-2h]$.

\textbf{Case 6:} $i(b)=Ua, i(c)=Ub-Ud, i(d)=Ua$, and $i(a)=0$, so $HFL^{-}(s_{1}, s_{2})=\langle a, b-d \rangle$. Here $a$ has Maslov grading $-2h$ and $b-d$ has Maslov grading $-2(h+1)+1=-2h-1$. Thus $HFL^{-}(s_{1}, s_{2})=\F[-2h]\oplus \F[-2h-1]$.

\begin{figure}[H]
\begin{picture}(100,130)(140,40)

\put(235,60){\framebox(85,40)}
\put(300,68){\makebox(0,0){$h$}}

\put(300,90){\makebox(0,0){$h$}}

\put(255,68){\makebox(0,0){$h+1$}}

\put(255,90){\makebox(0,0){$h$}}

\put(270,50){\makebox(0,0)[I]{Case 4}}

\put(235,120){\framebox(85,40)}
\put(300,128){\makebox(0,0){$h$}}

\put(300,150){\makebox(0,0){$h$}}

\put(255,128){\makebox(0,0){$h$}}

\put(255,150){\makebox(0,0){$h$}}

\put(270, 110){\makebox(0,0)[I]{Case 1}}

\put(335,120){\framebox(85,40)}
\put(405,128){\makebox(0,0){$h+1$}}

\put(405,150){\makebox(0,0){$h$}}

\put(355, 128){\makebox(0,0){$h+1$}}

\put(355, 150){\makebox(0,0){$h$}}

\put(375, 110){\makebox(0,0)[I]{Case 2}}

\put(335,60){\framebox(85,40)}
\put(405,68){\makebox(0,0){$h+1$}}

\put(405,90){\makebox(0,0){$h$}}

\put(355,68){\makebox(0,0){$h+1$}}

\put(355,90){\makebox(0,0){$h+1$}}

\put(375,50){\makebox(0,0)[I]{Case 5}}

\put(445,120){\framebox(85,40)}
\put(510,128){\makebox(0,0){$h$}}

\put(510,150){\makebox(0,0){$h$}}

\put(465,128){\makebox(0,0){$h+1$}}

\put(465,150){\makebox(0,0){$h+1$}}

\put(490, 110){\makebox(0,0)[I]{Case 3}}

\put(445,60){\framebox(85,40)}
\put(515,68){\makebox(0,0){$h+1$}}

\put(510,90){\makebox(0,0){$h$}}

\put(465,68){\makebox(0,0){$h+2$}}

\put(465,90){\makebox(0,0){$h+1$}}

\put(490, 50){\makebox(0,0)[I]{Case 6}}
\end{picture}
\caption{Possible local behaviours of the  $H$-function. \label{figure 1}}

\end{figure}

\section{Proofs of the main results}
\label{sec:proof}
\subsection{Special \texorpdfstring{$L$}{L}-space links are fibered and strongly quasi-positive}
\begin{lemma}
 \label{lemma:hat}
 Suppose that $\L$ is a fibered link and $x$ is a cycle in $\widehat{A}_0(\L,s_{\textup{top}})$ such that $[x]$ is a generator of $\widehat{HFL}_0(\L,s_{\textup{top}})$. Then we have that $\tau(\L)=s_{\textup{top}}$. 
\end{lemma}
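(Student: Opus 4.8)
The plan is to locate $\tau(\L)$ by comparing the two inclusions
$$H_0\bigl(\widehat A(\L,s_{\textup{top}})\bigr)\longrightarrow H_0\bigl(\widehat{CFL}(\L)\bigr)\qquad\text{and}\qquad H_0\bigl(\widehat A(\L,s_{\textup{top}}-1)\bigr)\longrightarrow H_0\bigl(\widehat{CFL}(\L)\bigr),$$
showing that the first is non-trivial while the second vanishes. Since $\tau(\L)$ is by definition the minimal level at which this inclusion is non-trivial, establishing non-triviality at $s_{\textup{top}}$ gives $\tau(\L)\le s_{\textup{top}}$ and triviality at $s_{\textup{top}}-1$ gives $\tau(\L)\ge s_{\textup{top}}$, so together they yield $\tau(\L)=s_{\textup{top}}$.

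First I would set up the two relevant dimension counts. By the definition of $s_{\textup{top}}$ we have $\widehat{HFL}_*(\L,s)=0$ for every $s>s_{\textup{top}}$, so the quotient complexes $\widehat A(\L,s)/\widehat A(\L,s-1)$ are acyclic there; the long exact sequences of these pairs then show that the inclusions $\widehat A(\L,s)\hookrightarrow\widehat A(\L,s+1)$ induce isomorphisms on homology for all $s\ge s_{\textup{top}}$, whence $H_*(\widehat A(\L,s_{\textup{top}}))\cong H_*(\widehat{CFL}(\L))$ via inclusion. In homological degree $0$ the target is the degree-$0$ part of $\widehat{HF}_0(S^3)\otimes(\F_{-1}\oplus\F_0)^{\otimes n-1}$, which is one-dimensional; hence $\iota_*\colon H_0(\widehat A(\L,s_{\textup{top}}))\to H_0(\widehat{CFL}(\L))$ is an isomorphism of one-dimensional spaces, so it is non-trivial and $\tau(\L)\le s_{\textup{top}}$. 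For the other side, fiberedness together with Theorem \ref{thm:fibered} gives $\dim_\F\widehat{HFL}_*(\L,s_{\textup{top}})=1$, and since the generator $[x]$ lives in homological degree $0$ this $\F$ is concentrated there, so $\widehat{HFL}_0(\L,s_{\textup{top}})$ is one-dimensional as well.

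Next I would read off the behaviour of the lower inclusion from the long exact sequence of the pair $(\widehat A(\L,s_{\textup{top}}),\widehat A(\L,s_{\textup{top}}-1))$,
$$H_0\bigl(\widehat A(\L,s_{\textup{top}}-1)\bigr)\xrightarrow{\ j_*\ }H_0\bigl(\widehat A(\L,s_{\textup{top}})\bigr)\xrightarrow{\ \pi_*\ }\widehat{HFL}_0(\L,s_{\textup{top}})\longrightarrow\cdots.$$
The hypothesis on $x$ is precisely the statement that $\pi_*$ carries the class of $x$ to a generator of $\widehat{HFL}_0(\L,s_{\textup{top}})$, so $\pi_*$ is surjective; being a surjection between one-dimensional $\F$-spaces it is an isomorphism, hence injective, and therefore $\operatorname{im}j_*=\ker\pi_*=0$, i.e.\ $j_*=0$. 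Because the inclusion $H_0(\widehat A(\L,s_{\textup{top}}-1))\to H_0(\widehat{CFL}(\L))$ factors as $\iota_*\circ j_*$ with $\iota_*$ an isomorphism, it is the zero map; thus $\tau(\L)\ge s_{\textup{top}}$, and combined with the bound above $\tau(\L)=s_{\textup{top}}$.

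The homological algebra of the filtration is routine; the genuinely delicate point, and the one I would be most careful about, is the grading bookkeeping behind the two dimension counts---namely that both $H_0(\widehat{CFL}(\L))$ and $\widehat{HFL}_*(\L,s_{\textup{top}})$ are concentrated in the single homological degree $0$ in which $[x]$ sits. It is exactly this one-dimensionality that upgrades the a priori merely surjective $\pi_*$ to an isomorphism; without it one could only conclude $\tau(\L)\le s_{\textup{top}}$. Once the counts are in place, the acyclicity identification of $H_*(\widehat A(\L,s_{\textup{top}}))$ with $H_*(\widehat{CFL}(\L))$ and the factorization of the lower inclusion through $\iota_*$ complete the argument.
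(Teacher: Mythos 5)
Your proof is correct, and it rests on the same three inputs as the paper's: the vanishing of $\widehat{HFL}(\L,s)$ for $s>s_{\textup{top}}$, the one-dimensionality of $\widehat{HFL}(\L,s_{\textup{top}})$ coming from fiberedness via Theorem \ref{thm:fibered}, and the hypothesis that $[x]$ generates $\widehat{HFL}_0(\L,s_{\textup{top}})$. The execution, however, is genuinely different in flavor. The paper works entirely at the chain level: assuming a primitive $y$ with $\widehat\partial y=x$ (resp.\ $\widehat\partial y''=x+x''$ with $x''$ of lower filtration), it uses acyclicity of the subquotients above $s_{\textup{top}}$ to push $y$ down one Alexander level at a time until it lands in $\widehat A_1(\L,s_{\textup{top}})$, where it contradicts $[x]$ being a generator. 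You package the same acyclicity into standard homological algebra: the long exact sequences of the pairs give the inclusion-induced isomorphism $H_*(\widehat A(\L,s_{\textup{top}}))\cong H_*(\widehat{CFL}(\L))$ (replacing the paper's first descent), and exactness $\operatorname{im}j_*=\ker\pi_*=0$ at the pair $(\widehat A(\L,s_{\textup{top}}),\widehat A(\L,s_{\textup{top}}-1))$ replaces the second. Your version has two advantages: it makes visible that the inequality $\tau(\L)\leq s_{\textup{top}}$ uses neither fiberedness nor the hypothesis on $x$ (it holds for every link), and it isolates the one place where the hypothesis enters, namely the injectivity of $\pi_*$ via the dimension count you correctly flag as the delicate point. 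The paper's zig-zag is essentially your exactness statements unrolled by hand, so it is more self-contained but its logical structure is harder to read off. One small gloss you leave implicit: passing from ``the inclusion at level $s_{\textup{top}}-1$ vanishes'' to ``$\tau(\L)\geq s_{\textup{top}}$'' uses that for any $s<s_{\textup{top}}-1$ the inclusion factors through level $s_{\textup{top}}-1$; this is immediate because the filtration is increasing, but it is the step that turns your single vanishing statement into the minimality claim defining $\tau$.
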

\begin{proof}
 Since $\L$ is fibered, from Theorem \ref{thm:fibered} one has $\dim_{\F}\widehat{HFL}(\L,s_{\text{top}})=1$ and the generator is $[x]$. 
 We first want to show that the homology class of $x$ (note that such a class is not $[x]$) is a generator of $\widehat{HF}_0(S^3)\otimes\left(\F_{-1}\oplus\F_0\right)^{\otimes\:n-1}$ in homological grading 0.
 If this is not the case then there exists a $y\in\widehat A_1(\L,s')$ with $s_{\text{top}}<s'$ such that $\widehat\partial y=x$. This necessarily means that we can find an element $z$, and also a $y'\in\widehat A_1(\L,s'-1)$, such that $\widehat\partial z=y+y'$. Otherwise, $\widehat{HFL}(\L, s')$ is non-zero, which is a contradiction. 
 Hence, we write \[0=\widehat\partial(y+y')=x+\widehat\partial y'\:,\] which means $\widehat\partial y'=x$. Since we can iterate this procedure, everytime strictly decreasing the Alexander filtration level, the claim is proved.
 
 In order to prove that $\tau(\L)=s_{\text{top}}$ we need to obstruct the existence of an $x''\in\widehat A_0(\L,s'')$ with $s''<s_{\text{top}}$ and a $y''$ such that $\widehat\partial y''=x+x''$. Suppose that $y''$ lives in an Alexander level strictly bigger than $s_{\text{top}}$; then we can apply the same argument as before and obtain that $y''\in\widehat A_1(\L,s_{\text{top}})$ necessarily. This is impossible because $[x]$ is a generator of $\widehat{HFL}_0(\L,s_{\text{top}})$.
\end{proof}

We can now prove one of the main results in the paper.
\begin{proof}[Proof of Theorem \ref{stypeb}]
 Since $\L$ is a special $L$-space link, there is an $\textbf s=(s_1,...,s_n)\in\H(\L)$ such that $H_{\mathcal L}(\textbf w)=0$ if and only if $\textbf w\succeq\textbf s$, and the $H$-function stabilizes above the hyperplane $\{x_1+...+x_n=s_1+...+s_n-n\}$ which passes through $\bm s-\bm 1$, i.e. 
 for every $\textbf x\in\H(\L)$ and every coordinate $x_i$ such that \[\sum_{j=1}^nx_j\geq\sum_{j=1}^ns_j-n\quad\text{ and }\quad x_i\geq s_i\] one has $H_{\L}(x_1, \cdots, x_{i}, \cdots, x_{n})=H_{\L}(x_{1}, \cdots, \infty, \cdots, x_{n})$.
 
 We claim that $\widehat{HFL}(\L, (s'_{1}, \cdots, s'_{n}))=0$ if $\sum_{j} s'_{j}\geq \sum_{j} s_j$ and $\bm{s}'\neq \bm{s}$. There exists $i$ such that  $s'_{i}>s_{i}$. It suffices to prove that  $HFL^{-}(\L, (s'_{1}, \cdots, s'_{n}))=0$ by Proposition \ref{spectral sequence 2}. The argument is similar to the one in \cite[Theorem 1.3]{Liu}. By Proposition \ref{spectral sequence 1}, there exists a spectral sequence converging to $HFL^{-}(\L, \bm{s}')$ with the $E_{1}$-page $$E_{1}(\bm{s}')=\bigoplus_{B\subset \lbrace 1, \cdots, n \rbrace} H_{\ast}(A^{-}(\L, \bm{s}'-\bm{e}_{B})).$$ 
Let $\mathcal{K}=\lbrace 1, \cdots, n \rbrace \setminus \lbrace i \rbrace$, and 
$$E'(\bm{s}')=\bigoplus_{B\subset \mathcal{K}} H_{\ast}(A^{-}(\L, \bm{s}'-\bm{e}_{B})),\quad  E''(\bm{s}')=\bigoplus_{B\subset \mathcal{K}} H_{\ast}(A^{-}(\L, \bm{s}'-\bm{e}_{B}-\bm{e}_{i})).$$
Then $E_{1}(\bm{s}')=E'(\bm{s}')\oplus E''(\bm{s}')$. Recall that for each $B\subset \lbrace 1, \cdots, n \rbrace$, $H_{\ast}(A^{-}(\L, \bm{s}'-\bm{e}_{B}))\cong \F[U]$. Let  $\partial_{1}, \partial', \partial ''$ denote  the differentials in $E_{1}(\bm{s}'), E'(\bm{s}')$ and $E''(\bm{s}')$, respectively. Let $z$ denote the generator of $H_{\ast}(A^{-}(\L, \bm{s}'-e_{B}-e_{i}))\in E''(\bm{s}')$ with homological grading $-2H(\bm{s}'-e_{B}-e_{i})$. Observe that $H(\bm{s}'-e_{B}-e_{i})=H(\bm{s}'-e_{B})$ since $s'_{i}>s_{i}$ and $\L$ is special. Then $\partial_{1}(z)=\partial''(z)+z'$ where $z'$ is the generator of $H_{\ast}(A^{-}(\L, \bm{s}'-\bm{e}_{B}))$ with homological grading $-2H(\bm{s}'-e_{B})$. Let $\mathcal{D}$ be an acyclic chain complex with two generators $a$ and $b$, and the differential $\partial_{D}(a)=b$. Then the chain complex $(E_{1}(\bm{s}' ), \partial_{1} )$  is isomorphic to $(E'(\bm{s}')\otimes \mathcal{D}, \partial_{1}\otimes \partial_{D})$. Thus $E_{2}=0$, and the spectral sequence collapes at $E_{2}$. Therefore, $HFL^{-}(\L, \bm{s}')=0$ whenever $s_1'+...+s_n'\geq s_1+...+s_n$ and $\textbf s'\neq\textbf s$.

We now prove that $\widehat{HFL}(\L, \bm{s})=\F_{0}$. Since $\L$ is special, $H_{\L}(\bm{s})=0$ and for any non-empty subset $B\subset \{1, \cdots, n\}$, $H_{\L}(\bm{s}-\bm{e}_{B})=1$. Then for the spectral sequence of $HFL^{-}(\L, \bm{s})$, it is not hard to see that there is only one tower with top grading $0$ in the $E_1$-page,  which corresponds to $H_{\ast}(A^{-}(\L, \bm{s}))$ by Proposition \ref{spectral sequence 1}. Let $z_{\bm{s}}$ denote the cycle in $H_{\ast}(A^{-}(\L, \bm{s}))$ with Maslov grading $0$. By Remark \ref{differential}, it is the only cycle sin the $E_2$-page. Hence, the spectral sequence collapses at $E_{2}$-page, and $HFL^{-}(\L, \bm{s})=\F_{0}$ generated by $z_{\bm{s}}$. Recall that $HFL^{-}(\L, \bm{s}')=0$ for all $\bm{s}'=\bm{s}+\bm{e}_{B}$. Hence, 
$$\widehat{HFL}(\L, \bm{s})=HFL^{-}(\L, \bm{s})=\F_{0}$$
by Proposition \ref{spectral sequence 2}. Therefore, for $s=s_{1}+\cdots+ s_{n}$ 
$$\widehat{HFL}(L, s)=\bigoplus_{\{\bm{v}=(v_{1}, \cdots, v_{n})\mid v_{1}+\cdots+v_{n}=s\}}\widehat{HFL}(\L, \bm{v})=\widehat{HFL}(\L, \bm{s})=\F_{0}.$$
By  Theorem \ref{thm:fibered}, the link $\L$ is fibered. By the argument above, we see that $s=s_{\textup{top}}$ and there exists a cycle in  $\widehat{A}_{0}(\L, s_{\textup{top}})$ which is a generator of $\widehat{HFL}_{0}(\L, s_{\textup{top}})$. By Lemma  \ref{lemma:hat},  we conclude that $\tau(\L)=s=g_3(\L)+n-1$. Finally, we can use Theorem \ref{thm:fibered_SQP} and obtain that $\L$ is also strongly quasi-positive.
\end{proof}

\begin{proof}[Proof of Theorem \ref{2com}]
Suppose the $L$-space link $\L$ is fibered and strongly quasi-positive. Then $s_{\text{top}}=\tau$ and $\widehat{HFL}(\L, s_{\text{top}})=\F_{0}$ by Lemma \ref{lemma:hat}. Hence, there exists a lattice point $\bm{s}=(s_{1}, s_{2})$ such that $\widehat{HFL}(\L, \bm{s})=\F_{0}$ and $s_{\text{top}}=s_1+s_{2}$. We claim that $HFL^{-}(\L, \bm{s})=\widehat{HFL}(\L, \bm{s})=\F_{0}$ and $H_{\L}(\bm{s})=0$. In fact, for all $\bm{s}\prec \bm{s}'$, we have $HFL^{-}(\L, \bm{s}')=0$. Otherwise, there exists a lattice point  $\bm{s}\prec \bm{v}$ such that $HFL^{-}(\L, \bm{v})\neq 0$, but $HFL^{-}(\L, \bm{v}')=0$ for all $\bm{v}\prec \bm{v}'$. By Proposition \ref{spectral sequence 2}, $\widehat{HFL}(\L, \bm{v})=HFL^{-}(\L, \bm{v})\neq 0$, contradicting to the assumption of $s_{\textup{top}}$. Hence, we obtain that $HFL^{-}(\L, \bm{s})=\widehat{HFL}(\L, \bm{s})=\F_{0}$ by applying the spectral sequence again. 

If $H_{\L}(\bm{s})\geq 1$, by Lemma \ref{growth control}, $H_{\L}(\bm{s}')\geq 1$ for all $\bm{s}'\prec \bm{s}$. By the computation of $HFL^{-}(\L)$ for 2-component $L$-space links in Section \ref{sec:the H-function}, it is not hard to see that the generator of $HFL^{-}(\L, \bm{s})$ has Maslov grading $\leq -1$, contradicting to the assumption that $\widehat{HFL}(\L, \bm{s})=\F_{0}$. At the end, we prove that the link $\L$ is type (B). Combining the condition $H_{\L}(\bm{s})=0$ and $HFL^{-}(\L, \bm{s})=\F_{0}$, it is not hard to see that the $H$-function corresponding to $HFL^{-}(\L, \bm{s})$ is one of the following two cases:

\begin{figure}[H]
\centering
\includegraphics[width=3.0in]{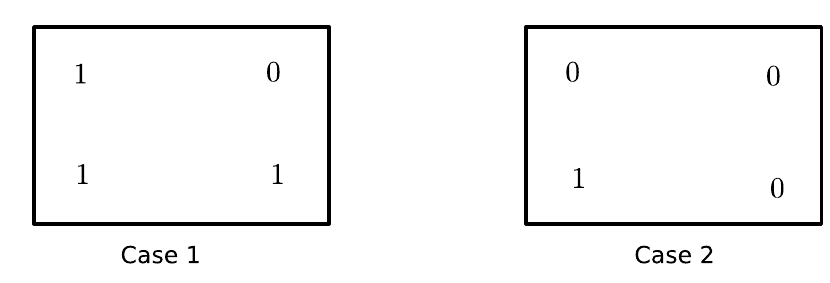}
\caption{In the proof of Theorem \ref{2com} we restricted the local behaviour of the $H$-function to two possible cases.}  
\label{F2}
\end{figure}

However, in Case 2 $HFL^{-}(\L, \bm{s})=\F_{1}$ because of Remark \ref{remark:total}, which contradicts our assumption. Then the $H$-function is as in Case 1. In order to prove the link is type (B), it suffices to prove that $H_{\L}(s_{1}-1, s'_{2})=1$ for all $s'_{2}> s_{2}$ and similarly, $H_{\L}(s'_{1}, s_{2}-1)$ for all $s'_{1}> s_{1}$. If there exists a minimum $s'_{2}>s_{2}$ such that $H_{\L}(s_{1}-1, s'_{2})\neq 1$, by Lemma \ref{growth control}, it must be $0$. By the computation of $HFL^{-}$ in Section \ref{sec:computation},  we  see that $HFL^{-}(\L, (s_{1}, s'_{2}))\neq 0$, contradicting our claim above. Hence, the link is type (B). 

The proof that the link is special requires us to also show that the $H$-function stabilizes, which means it satisfies Equation \eqref{equ3}. If this does not happen, by the similar argument above,   there exists a lattice point $\bm{v}=(v_{1}, v_{2})\in \H(\L)$ such that $HFL^{-}(\L, \bm{v})=\widehat{HFL}(\L, \bm{v})\neq 0$ and $v_{1}+v_{2}\geq s_{1}+s_{2}$. Hence either $\dim\widehat{HFL}(\L,s')>0$ for an $s'>s_{\text{top}}$ or $\dim\widehat{HFL}(\L,s_{\text{top}})>1$; the first claim is impossible, while the second one contradicts the assumption of $\L$ being fibered.
\end{proof}
\begin{proof}[Proof of Corollary \ref{cor:genus}]
 It follows combining the proof of Theorem \ref{stypeb} with the definition of special $L$-space link. In fact, one has that $\textbf s=(s_1,...,s_n)$ is such that \[s_i=g_3(L_i)+\dfrac{\lk(L_i,\L\setminus L_i)}{2}\] and then $s_1+...+s_n=s_{\text{top}}$ as we saw before. Moreover, the fact that $\tau(\L)=\nu^+(\L)=g_3(\L)+n-1$ is a consequence of Theorem \ref{thm:fibered_SQP}.
\end{proof}

\subsection{Algebraic and quasi-positive links}
We recall that algebraic links are the links of planar complex curve singularities in $S^3$. They are coded by their embedded resolution graphs, which are connected and negative definite graphs, see \cite{Neumann1,Neumann2}. In particular, algebraic links are (a subfamily of) iterated positive cables of the unknot and it is known, see \cite[Theorem 2.1.5]{GN2}, that they are $L$-space links.
\begin{proof}[Proof of Proposition \ref{algebralinks}]
 Suppose that $\L=L_1\cup...\cup L_n$ is an algebraic link; 
 let $\bm{s}=(s_{1}, \cdots, s_n)$ such that 
 \[s_{i}=g_{3}(L_{i})+\dfrac{\lk(L_{i}, \L\setminus L_{i})}{2}\:.\]
 We first prove that  
 $$H_{\L}(v_{1}, \cdots, v_{i}, \cdots, , v_{n})=H_{\L}(v_{1}, \cdots, \infty, \cdots, v_{n} ) \quad \textup{ if }v_{i}\geq s_{i}\:. $$
Recall that 
 $$H_{\L}(\infty, \cdots, v_{i}, \cdots, \infty)=0 \quad \textup{ for all } v_{i}\geq s_{i}\:.$$
 Then $H_{\L}(\bm{v})=H_{\L}(\bm{v}+e_{i})=0$ if $v_{i}\geq s_{i}$ and $v_{j}\gg 0$ for all $j\neq i$. By \cite[Lemma 3.6]{GN}, we have
 $$H_{\L}(\bm{v}-e_{K})=H_{\L}(\bm{v}-e_{K}+e_{i})\:$$
 for any subset $K\subset \{1, \cdots, n \} \setminus \{i\}$.
 Iterating this equation, one can prove that 
 $$H_{\L}(v_{1}, \cdots, v_{i}, \cdots, v_{n})=H_{\L}(v_{1}, \cdots, \infty, \cdots, v_{n}) \quad  \textup{ if }  v_{i}\geq s_{i}\:.$$
 By using the similar argument, one can show that the $H$-function satisfies Equation \eqref{equ3}. 
 Now it suffices to prove that $H_{\L}(\bm{s}-\bm{1})=1$. Observe that $H(\bm{s}-e_{i})=1$ for any $i\in \{1, \cdots, n\}$ by \cite[Lemma 3.6]{GN}: then the claim follows from Equation (3.2), Lemma 3.6 and Paragraph 7.1.A in \cite{GN}, where it is shown that if $\bm s\succ\bm v$ and $H_{\L}(\bm v-e_j)>H_{\L}(\bm v)$ for some $j\in\{1,...,n\}$ then $v_j<s_j$.
 
 In fact, take a non-empty $N\subset\{1, \cdots, n \} \setminus \{j\}$. Then, writing $N'=N\cup\{j\}$, we can suppose that $v=\bm s-e_N$ and $v-e_j=\bm s-e_{N'}$. For what we said before we should have $v_j<s_j$, but we see that $v_j=s_j$ and this is a contradiction. It follows that $H_{\L}(\bm s-e_{N'})=H_{\L}(\bm s-e_N)$ and inductively we obtain $H_{\L}(\bm s-\bm 1)=H_{\L}(\bm s-e_{\{1,...,n\}})=H_{\L}(\bm s-e_1)=1$.

\end{proof}

\begin{remark}
Note that the convention for the  $H$-function here  is different from the convention in \cite{GN}. The lattice point $\bm{s}$ in our proof corresponds to the lattice point $(0, \cdots, 0)$ in \cite{GN}, and their $h$-function (this is how it is denoted in \cite{GN}) is obtained from our $H$-function by shifting $\bm{s}$ to the origin and then reflecting at the origin.
\end{remark}

Quasi-positive links are defined as the transverse intersection of the 3-sphere $S^3\subset\mathbb C^2$ with the complex curve $f^{-1}(0)$,  where $f:\mathbb C^2\rightarrow\mathbb C$ is a non-constant polynomial. It follows from their braid equivalent definition (\cite{Cavallo18-2}) that strongly quasi-positive links, and then algebraic links, are quasi-positive, but the latter one is in fact a much larger class.
\begin{proof}[Proof of Proposition \ref{prop:quasi}]
 Let us denote the the 2-bridge link $b(4k^{2}+4k, -2k-1)$ with $L_k$ and its mirror image with $L_k^*$. Since $L_k$ is non-split alternating, we compute easily that \[\tau(L_k)=k\quad\text{ and }\quad\tau(L_k^*)=1-k\:,\] see \cite{Cavallo18}. From \cite{Cavallo18-2} we have that if a 2-component link $L$ is quasi-positive then $2\tau(L)-2=-\chi_4(L)$, where $\chi_4(L)$ is the maximal value of the Euler characteristic of a properly embedded, compact, oriented surface $\Sigma$ in $D^4$ such that $\partial\Sigma=L$. Hence, the link $L_k^*$ cannot be quasi-positive because $\chi_4\leq2$.
 
 In the same way, if $L_k$ is quasi-positive then from \cite[Theorem 1.2]{Cavallo18-2} we have that the maximal self-linking number $\text{SL}(L_k)$ is equal to $2\tau(L_k)-2=2k-2$. Since the link has unknotted components and linking number zero, we can write \[2k-2=\text{SL}(L_k)\leq2\cdot\text{SL}(\bigcirc)=-2\] which implies $k\leq0$ and then we have a contradiction.
\end{proof}

\end{document}